\def\NC{{\cal N}}
\def\ZZ{{\mathbb Z}}
\def\PP{{\cal P}}
\def\RR{{\mathbb R}}
\def\rank{r}
\def\supp{\mathrm{supp}\;}
\def\Div{\mathrm{Div}}
\newcommand{\ord}[2]{\mathrm{ord}_{#1}(#2)} 
\newcommand{\divi}[1]{\mathrm{div}(#1)} 
\def\deg{\mathrm{deg}}
\def\dist{\mathrm{dist}}
\newtheorem{theorem}{Theorem}[section]
\newtheorem{lemma}[theorem]{Lemma}
\newtheorem{proposition}[theorem]{Proposition}
\newtheorem{corollary}[theorem]{Corollary}
\newtheorem{subtheorem}{Theorem}[theorem]
\newtheorem{claim1}[subtheorem]{Claim}
\theoremstyle{remark}
\newtheorem{remark}[theorem]{Remark}
\date{}
\begin{document}
\title{Rank of divisors on tropical curves}
\author{Jan Hladk{\'y}\thanks{
       DIMAP and Mathematics Institute, University of Warwick, Coventry CV4 7AL,
United Kingdom, E-mail: {\tt
        honzahladky@gmail.com}. This research was partially
        supported by grant KONTAKT ME 885.} \and Daniel
        Kr{\'a}l'\thanks{Mathematics Institute and Department of Computer
        Science, University of Warwick, Coventry CV4 7AL, United Kingdom.
        E-mail:
        {\tt D.Kral@warwick.ac.uk}. Previous affiliation: Computer Science Institute, Faculty of Mathematics and Physics, Charles University, Prague, Czech Republic.
     This research was partially supported by grant KONTAKT ME 885.} \and
         Serguei Norine\thanks{
         Department of Mathematics \& Statistics, McGill
         University, Burnside Hall, 805~Sherbrooke West,
		 Montreal, QC, H3A~2K6, Canada.
     	 E-mail: {\tt snorin@math.mcgill.ca}. The author
     	 was supported in part by NSF under Grant No.
     	 DMS-0200595 and DMS-0701033.} }

\maketitle
\begin{abstract}
We investigate, using purely
combinatorial methods, structural and algorithmic properties of linear
equivalence classes of divisors on tropical curves. In particular, we confirm a
conjecture of Baker asserting that the rank of a divisor $D$ on a
(non-metric) graph is equal to the rank of $D$ on the corresponding
metric graph, and construct an algorithm for computing the
rank of a divisor on a tropical curve.
\end{abstract}

\section{Introduction}
Tropical geometry investigates properties of tropical varieties,
objects which are commonly considered to be combinatorial
counterparts of algebraic varieties. There are several survey
articles on this recent branch of
mathematics~\cite{bib-mikhalkin2,bib-richtergebert+,bib-speyer+}. In particular,~\cite{bib-gathmann}
concentrates on topics which are particularly close to the subject of
this paper.

Tropical varieties share many important features with their
algebro-geometric analogues, and allow for a variety of algebraic,
combinatorial and geometric techniques to be used. 
We illustrate this on several important examples related to the
current paper.
\begin{itemize}
\item In~\cite{bib-baker+}, a version
of the Riemann-Roch theorem for graphs was proved by purely
combinatorial methods. Shortly afterwards Gathmann and
Kerber~\cite{bib-gathmann+} used the result to prove Riemann-Roch
theorem for tropical curves. Their contribution was a method of
approximating a tropical curve by graphs.
\item Mikhalkin and Zharkov~\cite{bib-mikhalkin+}
 gave (among others) another proof of the Riemann-Roch
theorem for tropical curves. Their approach used a combination of
algebraic and combinatorial techniques.
\item Recently, a machinery which allows one to transfer certain results from Riemann surfaces to
tropical curves has been developed in~\cite{bib-baker}. For example, Baker~\cite{bib-baker} introduced a tropical version of Weierstrass points, and proved using this machinery that every tropical curve of genus more than one contains at least one such point, a fact well known in the context of algebraic curves. Note that the
method necessarily has some limitations. Indeed, it is known that
analogues of some theorems about Riemann surfaces do not hold in the
tropical context, Pappus' Theorem being one such example~(\cite[\S 7]{bib-richtergebert+}).
\end{itemize}

In this paper, we contribute further towards the theory by proving
new structural results on divisors on tropical curves. In particular, we
confirm a conjecture of Baker~\cite{bib-baker} relating the ranks of a divisor
on a graph and on a tropical curve (see Theorem~\ref{thm-graph}), and construct an algorithm for computing the
rank of a divisor on a tropical curve (see Theorem~\ref{thm-alg}). All the
proofs in the paper are purely combinatorial. In an expanded version of this
article~\cite{bib-hladky+full} we have employed these results to obtain an
alternative proof of the Riemann-Roch theorem for tropical curves.

\subsection{Overview and notation}\label{sec-intro}
Throughout the paper, a {\em graph} $G$ is a finite connected multigraph that can
contain loops, i.e., $G$ is a pair consisting  of a set $V(G)$ of
{\em vertices} and a multiset $E(G)$ of {\em edges}, which are
unordered pairs of not necessarily distinct vertices. The degree
$\deg_G(v)$ of a vertex $v$ is the number of edges incident with it
(counting loops twice). The $k$-th \emph{subdivision} of a graph $G$ is the graph $G^k$
obtained from $G$ by replacing each edge with a path with $k$ inner
vertices.

Graphs have been considered as analogues of
Riemann surfaces in several contexts, in particular,
in~\cite{bib-baker+, bib-baker2+} in the context of linear
equivalence of divisors. In this paper we further investigate the
properties of linear equivalence classes of divisors. We primarily
concentrate on metric graphs, but let us start the exposition by
recalling the definitions and results from~\cite{bib-baker+}
related to (non-metric) graphs.

A {\em divisor} $D$ on a graph $G$ is an element of the free abelian
group $\Div(G)$ on $V(G)$. We can write each element $D \in \Div(G)$
uniquely as $$D = \sum_{v \in V(G)} D(v) (v)$$ with $D(v) \in \ZZ$.
We say that $D$ is {\em effective}, and write $D \geq 0$, if $D(v)
\geq 0$ for all $v \in V(G)$. For $D \in \Div(G)$, we define the
{\em degree} of $D$ by the formula $$\deg(D) = \sum_{v \in V(G)}
D(v).$$ Analogously, we define $$\deg^+(D)= \sum_{v\in V(G)
}\max\{0,D(v)\}.$$ For a function $f: V(G) \rightarrow \ZZ$, the {\em divisor associated to} $f$ is given by the formula
\[
\divi{f} = \sum_{v \in V(G)} \sum_{e = vw \in E(G)} \left( f(v) - f(w)
\right)(v).
\]
Divisors associated to integer-valued functions on
$V(G)$ are called \emph{principal}. An equivalence relation
$\sim$ on $\Div(G)$, is defined as $D \sim D'$, if and only if $D - D'$
is principal. We sometimes write
$\sim_G$ instead of $\sim$ when the graph is not clearly understood
from the context. For a divisor $D$, $|D|$ denotes the set of effective
divisors equivalent to it, i.e.,
\[
|D| = \{ E \in \Div(G) \; : \; E \geq 0 \textrm{ and } E \sim D \}.
\]
We refer to $|D|$ as the {\em (complete) linear system} associated
to $D$. Sometimes, we write $|D|_G$ for $|D|$ if the underlying graph $G$ is not
clear. If $D \sim D'$, we call the divisors $D$ and $D'$ {\em equivalent} (or {\em linearly equivalent}).

The {\em rank} of a divisor $D$ on a graph $G$ is defined as
\begin{equation}\label{eq:defR}
            \rank_{G}(D)= \min_{\substack{
                      E\ge 0\\
                      |D-E|=\emptyset}
             } \deg(E)-1 \; \mbox{.}
\end{equation}             
We frequently omit the subscript $G$ in $\rank_{G}(D)$ when the graph $G$ is clear from the context.
Also note that $\rank(D)$ depends
only on the linear equivalence class of $D$. In the classical case,
$r(D)$ is usually referred to as the dimension of the linear system
$|D|$. In our setting, however, we are not aware of any
interpretation of $r(D)$ as the topological dimension of a physical
space. Thus, we refer to $r(D)$ as ``the rank" rather than ``the
dimension". See Remark~1.13 of~\cite{bib-baker+} for further
discussion about similarities and differences between our definition
of $r(D)$ and the classical definition in the Riemann surface case.

The {\em canonical divisor} on $G$ is the divisor $K_G$ defined as
\[
K_G = \sum_{v \in V(G)} (\deg(v) - 2)(v).
\]
The {\em genus} of $G$ is the number $g = |E(G)| - |V(G)| + 1$. In graph theory, $g$ is called the {\em cyclomatic number} of $G$.

The following graph-theoretical analogue of the classical
Riemann-Roch theorem is one of the main results
of~\cite{bib-baker+}.

\begin{theorem}
\label{thm-graph-rr} If $D$ is a divisor on a loopless graph $G$ of
genus $g$, then
\[
r(D) - r(K_G - D) = \deg(D) + 1 - g.
\]
\end{theorem}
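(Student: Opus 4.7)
The plan is to follow the strategy pioneered by Baker and Norine, whose combinatorial backbone rests on the notion of a $q$-reduced divisor. Fix a base vertex $q \in V(G)$. Call $D \in \Div(G)$ \emph{$q$-reduced} if $D(v) \geq 0$ for every $v \neq q$ and if for every nonempty $A \subseteq V(G) \setminus \{q\}$ some $v \in A$ satisfies $D(v) < \outdeg_A(v)$, where $\outdeg_A(v)$ counts the edges from $v$ leaving $A$. My first task is to establish that every divisor class on $G$ contains a unique $q$-reduced representative: existence follows by iteratively ``firing'' a violating set $A$ (subtracting the principal divisor $\divi{\chi_A}$) until the condition holds, and uniqueness can be checked by a direct integrality argument using the genus formula. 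Once this is in place, Dhar's burning algorithm gives a constructive criterion for being $q$-reduced, and one proves the pivotal lemma that $\rank(D) \geq 0$ if and only if the $q$-reduced representative $D_q$ of $D$ satisfies $D_q(q) \geq 0$.

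The second phase introduces the divisors attached to acyclic orientations. For an acyclic orientation $\mathcal{O}$ of $G$ with a unique source, define
\[
\nu_\mathcal{O} = \sum_{v \in V(G)} \bigl(\mathrm{indeg}_\mathcal{O}(v) - 1\bigr)(v),
\]
which has degree $|E(G)| - |V(G)| = g - 1$. The structural result I would prove is: a divisor $D$ with $\deg(D) = g - 1$ has $\rank(D) = -1$ if and only if $D \sim \nu_\mathcal{O}$ for some such $\mathcal{O}$. For the ``only if'' direction, start from the $q$-reduced representative of $D$ (for a suitable $q$) and use Dhar's algorithm to read off an acyclic orientation in which indegrees match the divisor. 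For the ``if'' direction, one checks via the $q$-reduced criterion that no $\nu_\mathcal{O}$ is equivalent to an effective divisor. Because reversing every arrow sends an acyclic orientation with unique source to one with unique sink, and because $K_G - \nu_\mathcal{O} = \nu_{\mathcal{O}^{\mathrm{op}}}$, this characterization yields the key symmetry: for $\deg(D) = g - 1$, $\rank(D) = -1$ iff $\rank(K_G - D) = -1$.

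The final phase derives Riemann-Roch from this degree-$(g-1)$ symmetry. The easy monotonicity $\rank(D) \geq \rank(D - (v)) \geq \rank(D) - 1$ for any vertex $v$, together with the trivial bound $\rank(D) \geq \deg(D) - g$ when $\deg(D) \geq g$ (take any effective $E$ and repeatedly reduce), lets one translate between divisors of arbitrary degree and those of degree $g-1$. Concretely, one defines $s(D) = \rank(D) - \rank(K_G - D) - \deg(D) - 1 + g$, checks $s(D + (v)) = s(D)$ and $s(-D) = -s(D+K_G)$ type relations, and uses the rank$=-1$ symmetry to nail down $s \equiv 0$. The main obstacle, and the most delicate step, is the structural lemma identifying rank-$(-1)$ divisors of degree $g-1$ with the $\nu_\mathcal{O}$'s: both verifying that Dhar's burning procedure terminates at an honest acyclic orientation and confirming that distinct $\nu_\mathcal{O}$'s may nonetheless be equivalent in a way compatible with the equivalence class structure require careful bookkeeping between firing sets, indegree sequences, and principal divisors.
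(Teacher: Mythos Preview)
This paper does not prove Theorem~\ref{thm-graph-rr}; it is quoted from Baker--Norine~\cite{bib-baker+} as a background result, so there is no ``paper's own proof'' to compare against. Your outline is in fact a sketch of the original Baker--Norine argument: $q$-reduced divisors, Dhar's burning algorithm, the divisors $\nu_{\mathcal{O}}$ attached to acyclic orientations with unique source (equivalently, the $\nu_P$ attached to vertex orderings in this paper's language), and the involution $K_G - \nu_{\mathcal{O}} = \nu_{\mathcal{O}^{\mathrm{op}}}$ giving the degree-$(g-1)$ symmetry. In that sense your first two phases are on target and match the cited source rather than anything done here.

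One genuine gap in your final phase: the relations you posit for the auxiliary function $s(D)$, in particular ``$s(D+(v)) = s(D)$'', are not consequences of monotonicity alone. Both $r(D+(v)) - r(D)$ and $r(K_G-D) - r(K_G-D-(v))$ lie in $\{0,1\}$ independently, so a priori $s(D+(v)) - s(D) \in \{-1,0,1\}$; forcing it to be $0$ is essentially Riemann--Roch itself. The route Baker--Norine actually take, and which this paper records as Theorem~\ref{thm-comb-rr}, is cleaner: your structural work on the $\nu_{\mathcal{O}}$ verifies conditions (RR1) and (RR2), and the abstract criterion (together with the alternative rank formula of Lemma~\ref{lem-alt-rank}) then delivers the identity directly. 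Replace your $s(D)$ bookkeeping with an appeal to that criterion and the argument closes.
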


Let us note that while the graph-theoretical results, such as
Theorem~\ref{thm-graph-rr}, can be viewed as simply being analogous
to classical results from algebraic geometry, there exist deep
relations between the two contexts, e.g., a connection arising from the
specialization of divisors on arithmetic surfaces is explored in
\cite{bib-baker}.

Tropical geometry provides another connection between graph theory
and the theory of algebraic curves. The analogue of an algebraic curve
in tropical geometry is an \emph{(abstract) tropical curve}, which
following Mikhalkin~\cite{bib-mikhalkin}, can be considered simply
as a metric graph. A {\em metric graph} $\Gamma$ is a graph with
each edge being assigned a positive length. Each edge of a metric
graph is associated with an interval of the length assigned to the
edge with the end points of the interval identified with the end
vertices of the edge. A special type of edges are \emph{loops}, which are
edges where the two end points coincide. The points of these intervals are
referred to as {\em points} of $\Gamma$. The internal points of the interval are referred to as {\em internal} points of the edge and they form the
{\em interior} of the edge. Subintervals of these intervals are then
referred to as {\em segments}. 

This geometric representation of $\Gamma$ equips the metric graph
with a topology, in particular, we can speak about open and closed
sets. The distance $\dist_{\Gamma}(v,w)$ between two points $v$ and
$w$ of $\Gamma$ is measured in the metric space corresponding to the
geometric representation of $\Gamma$. For an edge
$e$ of $\Gamma$ and two points $x,y\in e$ we use $\dist_e(x,y)$ to
denote the distance between $x$ and $y$ measured on the edge $e$. Note that in
general, $\dist_\Gamma(x,y)$ can be strictly smaller than
$\dist_e(x,y)$.

The vertices of $\Gamma$ are called {\em branching points} and
the set of branching vertices of $\Gamma$ is denoted by
$B(\Gamma)$. We allow branching points of degree two. As
usual, we assume that the number of branching points of
$\Gamma$ is finite. 

A {\em tropical curve} is a metric graph where edges incident
with vertices of degree one (leaves) are allowed to have infinite length. Such
edges are identified with the interval $[0,\infty]$,
such that $\infty$ is identified with the vertex of degree one, and
are called {\em infinite edges}. The points corresponding to
$\infty$ are referred to as {\em unbounded ends}. The unbounded ends
are also considered to be points of the tropical curve.

The notions of genus, divisor, degree of a divisor and canonical
divisor $K_{\Gamma}$ readily translate from graphs to metric graphs
and tropical curves (with basis of the free abelian group of
divisors $\Div(\Gamma)$ being the infinite set of all the points of
$\Gamma$). In order to define linear equivalence  on $\Div(\Gamma)$,
the notion of rational function has to be adapted.

A {\em rational function} on a tropical curve $\Gamma$ is a
continuous function $f:\Gamma\to\RR\cup\{\pm\infty\}$ which is a
piecewise linear function with integral slopes on every edge. We
require that the number of linear parts of a rational function on
every edge is finite and the only points $v$ with
$f(v)=\pm\infty$ are unbounded ends.

The {\em order} $\ord{v}{f}$ of a point $v$ of $\Gamma$ with respect
to a rational function $f$ is the sum of outgoing slopes of all the
segments of $\Gamma$ emanating from $v$. In particular, if $v$ is
not a branching point of $\Gamma$ and the function $f$ does not
change its slope at $v$, $\ord{v}{f}=0$. Hence, there are only
finitely many points $v$ with $\ord{v}{f}\not=0$. Therefore, we can
associate a divisor $\divi{f}$ to the rational function $f$ by setting
$\divi{f}(v)=\ord{v}{f}$ for every point $v$ of $\Gamma$. Observe that
$\deg(\divi{f})$ is equal to zero as each linear part of $f$ with slope
$s$ contributes towards the sum defining $\deg(\divi{f})$ by $+s$ and
$-s$ (at its two boundary points). Note that $\ord{v}{f}$ need not be
zero for unbounded ends $v$.

Rational functions on tropical curves lead to a definition of
principal divisors on tropical curves. In particular, we say that
divisors $D$ and $D'$ on $\Gamma$ are {\em equivalent} and write $D
\sim D'$ if there exists a rational function $f$ on $\Gamma$ such
that $D = D'+\divi{f}$. With this notion of equivalence the linear system
and the rank of a divisor on a tropical curve are defined in the same
manner as for finite graphs above, in particular:
\[
|D| = \{ E \in \Div(\Gamma) \; : \; E \geq 0 \textrm{ and } E \sim D
\},
\]
$$\rank_{\Gamma}(D)= \min_{\substack{
                      E\ge 0,\; E \in \Div(\Gamma) \\
                      |D-E|=\emptyset}
             } \deg(E)-1 \; \mbox{.}$$
We may occasionally use $|D|_\Gamma$ for $|D|$ if the
underlying tropical curve $\Gamma$ is not clear from the context. Gathmann and
Kerber~\cite{bib-gathmann+} and, independently, Mikhalkin and Zharkov~\cite{bib-mikhalkin+} have proved the
following version of the Riemann-Roch theorem for tropical curves.

\begin{theorem}\label{thm-trop-rr}
Let $D$ be a divisor on a tropical curve $\Gamma$ of genus $g$. Then
\[
r(D) - r(K_{\Gamma} - D) = \deg(D) + 1 - g.
\]
\end{theorem}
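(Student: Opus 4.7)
The plan is to deduce the tropical Riemann--Roch formula from its graph-theoretical version (Theorem~\ref{thm-graph-rr}) via the rank-preservation result of Theorem~\ref{thm-graph}. Concretely, I would realize $\Gamma$ as the metric graph associated to a sufficiently subdivided combinatorial graph $G'$ and then transport the graph identity to $\Gamma$.

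First I would reduce to the case in which all edge lengths of $\Gamma$ are rational and every point of $\supp D$ lies at a rational distance from the nearest branching point. Infinite edges can be truncated at a large finite length, since the only relevant points on them are the unbounded ends themselves. After an integer rescaling I may then assume that all edge lengths are positive integers (at least $2$ on any loop, so that a single subdivision removes it) and that $\supp D \cup B(\Gamma)$ consists entirely of integer points. Then $\Gamma$ is precisely the metric realization of a loopless multigraph $G'$ of the same genus $g$, and $D$ becomes a divisor supported on $V(G')$.

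Each vertex of $G'$ introduced by subdivision has degree $2$ and hence contributes $0$ to $K_{G'}$, while the remaining vertices of $G'$ lie in $B(\Gamma)$ with unchanged degree; therefore $K_{G'}$ agrees with $K_\Gamma$ as a divisor. Theorem~\ref{thm-graph-rr} applied to $G'$ then yields
\[
r_{G'}(D) - r_{G'}(K_{G'}-D) = \deg(D)+1-g,
\]
and two applications of Theorem~\ref{thm-graph} (to $D$ and to $K_\Gamma - D$) replace $r_{G'}$ by $r_\Gamma$ on both sides, giving the identity to be proved.

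The main obstacle is the rationality reduction. Rank is naturally upper semi-continuous under small perturbations, so rational approximations can only decrease it; the reverse inequality is the delicate step. I would handle it by transporting a rational function on $\Gamma$ witnessing $|D-E|\neq\emptyset$ to a nearby rational curve $\Gamma_\varepsilon$ with controlled slopes, and conversely, and then taking $\varepsilon \to 0$. This stability analysis, linking rational and irrational metric data, is precisely the technical core that the machinery developed earlier in the paper (and culminating in Theorem~\ref{thm-graph}) is designed to support.
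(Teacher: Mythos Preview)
The paper does not contain a proof of Theorem~\ref{thm-trop-rr}; it is quoted as a result of Gathmann--Kerber~\cite{bib-gathmann+} and Mikhalkin--Zharkov~\cite{bib-mikhalkin+}, with an alternative self-contained proof deferred to the expanded version~\cite{bib-hladky+full}. In the present paper the theorem is an \emph{input}, not an output: Corollary~\ref{cor-rank} explicitly invokes Theorem~\ref{thm-trop-rr} to obtain condition~(RR1), and the whole chain Lemma~\ref{lm-support} $\to$ Lemma~\ref{lm-positive} $\to$ Lemma~\ref{lm-single} $\to$ Theorem~\ref{thm:rankdeterming} $\to$ Theorem~\ref{thm-graph} rests on Corollary~\ref{cor-rank}. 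Your proposal therefore has a genuine gap: deducing Theorem~\ref{thm-trop-rr} from Theorem~\ref{thm-graph} is circular in the logical structure of this paper.

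Independently of the circularity, the rationality reduction you outline is not supported by anything here. Theorem~\ref{thm-graph} compares a \emph{fixed} metric graph with unit edge lengths to its underlying combinatorial graph; the paper proves no semicontinuity or perturbation statement for $r_\Gamma$ under variation of edge lengths, and your assertion that ``the machinery developed earlier in the paper \ldots is designed to support'' such a stability analysis is simply not accurate. An approximation argument of this flavour is indeed how Gathmann--Kerber~\cite{bib-gathmann+} proceed, but it requires separate work. The route taken in the expanded version~\cite{bib-hladky+full} (as signalled in Section~\ref{sec-comb-rr}) is different again: one verifies conditions~(RR1) and~(RR2) of Theorem~\ref{thm-comb-rr} directly for metric graphs, via the combinatorics of alignments and the divisors $\nu_P$, so that Theorem~\ref{thm-trop-rr} is obtained without appealing to Theorem~\ref{thm-graph} and without any limiting argument in the edge lengths.
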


Theorem~\ref{thm-trop-rr} is also proven in an expanded version of this
paper~\cite{bib-hladky+full}.


We prove in Section~\ref{sec-rank} the following theorem relating
the ranks of divisors on ordinary and metric graphs. Before stating
the theorem we need to introduce a definition. We say that a metric
graph $\Gamma$ {\em corresponds} to the graph $G$ if $\Gamma$ is
obtained from $G$ by setting the length of each edge of $G$ to be
equal to one.

\begin{theorem}
\label{thm-graph} Let $D$ be a divisor on a loopless graph $G$ and let
$\Gamma$ be the metric graph corresponding to $G$. Then,
$$r_G(D)=r_{\Gamma}(D).$$
\end{theorem}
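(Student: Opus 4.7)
The plan is to prove the two inequalities $r_\Gamma(D)\le r_G(D)$ and $r_G(D)\le r_\Gamma(D)$, after first identifying $\sim_G$ with $\sim_\Gamma$ on divisors supported on $V(G)$.

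The preliminary identification is that $\sim_G$ and $\sim_\Gamma$ coincide on $\Div(V(G))$. One direction is clear: any $f\colon V(G)\to\ZZ$ extends piecewise linearly to a rational function on $\Gamma$ with the same principal divisor. For the converse, if a rational function $f$ on $\Gamma$ has $\divi{f}$ supported on $V(G)$, then the vanishing of $\ord{p}{f}$ at every interior point $p$ of an edge forces $f$ to have no slope change at $p$; hence $f$ is affine on each edge, and integrality of slopes combined with unit edge lengths makes $f$ integer-valued on $V(G)$ up to a global constant, so $\divi{f}$ arises from a graph-theoretic principal divisor. Building on this, the next step is the \emph{Main Lemma}: for $D\in\Div(V(G))$, $|D|_G\ne\emptyset$ if and only if $|D|_\Gamma\ne\emptyset$. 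The easy direction follows from the forward identification. For the hard direction, I would develop a theory of $v$-reduced divisors on $\Gamma$ generalizing the Baker--Norine notion, prove existence and uniqueness of reduced representatives in each linear equivalence class on $\Gamma$, and show that for $v\in V(G)$ and $D\in\Div(V(G))$, the $\Gamma$-reduced representative of $D$ coincides with its $G$-reduced one; effectivity of the common reduced representative at $v$ then certifies effectivity in both settings.

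With the Main Lemma in place, $r_\Gamma(D)\le r_G(D)$ is immediate: a witness $E\ge 0$ on $V(G)$ with $|D-E|_G=\emptyset$ satisfies $|D-E|_\Gamma=\emptyset$ by the contrapositive of the hard direction of the Main Lemma, and hence also witnesses $r_\Gamma(D)$. The reverse inequality $r_G(D)\le r_\Gamma(D)$ is more subtle because $r_\Gamma(D)$ minimizes over effective $E\in\Div(\Gamma)$, which may carry chips at interior points of edges. The plan is a \emph{transport lemma}: if $E^*$ is a minimum-degree witness for $r_\Gamma(D)$ with a chip at an interior point $p$ of an edge $uv$, then at least one of $E^*-(p)+(u)$ or $E^*-(p)+(v)$ is still a witness. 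Iterating moves all chips of $E^*$ to vertices of $G$, producing a witness of the same degree supported on $V(G)$; the easy direction of the Main Lemma then transfers the vanishing of the linear system from $\Gamma$ to $G$. The transport lemma itself reduces to the connectedness, inside each edge, of the support locus of a linear system on $\Gamma$: if both endpoints of an edge lie in the support of some effective representative of the relevant linear system, then every interior point does too, which can be shown by continuously moving a chip along the edge via a one-parameter family of rational functions.

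The principal obstacle will be the analysis underpinning the Main Lemma: developing $v$-reduced divisors on metric graphs and showing that the continuous firing process on $\Gamma$ terminates at the same configuration as discrete vertex-firing on $G$ when applied to a divisor supported on $V(G)$. This requires a tropical refinement of Dhar's burning algorithm that monotonously consumes $\Gamma$ outward from $v$ while remaining compatible with the graph structure at $V(G)$; the delicate point is ruling out scenarios in which the continuous process would halt at an interior edge point even though its discrete analogue would continue to the next vertex.
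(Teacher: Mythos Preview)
Your overall strategy is sound but follows a genuinely different route from the paper. The paper never touches reduced divisors; instead it relies on the alignment formula $\rank(D)=\min_{D'\sim D,\,P}\deg^+(D'-\nu_P)-1$ (Corollary~\ref{cor-rank} for $\Gamma$, Lemma~\ref{lm-rank} for $G$). The inequality $r_G(D)\ge r_\Gamma(D)$ then falls out in one line (Claim~\ref{cl:half}) by observing that the minimization for $\Gamma$ ranges over a superset of that for $G$, which replaces your Main Lemma entirely. The harder inequality comes from Theorem~\ref{thm:rankdeterming} (that $B(\Gamma)$ is rank-determining), proved via successive combinatorial simplifications of rank-pairs (Lemmas~\ref{lm-support}--\ref{lm-single}); this is exactly your transport lemma in different clothing. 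Your route via metric $v$-reduced divisors and a continuous Dhar algorithm is essentially that of Luo~\cite{bib-luo} (see also~\cite{bib-amini}). The paper's approach is self-contained given the non-special-divisor machinery of Section~\ref{sec-non-spec}; yours stays closer to the discrete Baker--Norine picture and makes the link between $G$ and $\Gamma$ more transparent.

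There is, however, a gap in your sketch of the transport lemma. ``Continuously moving a chip along the edge via a one-parameter family of rational functions'' does not work as literally stated: sliding a single chip from $u$ to an interior point $p$ would require a rational function $h$ with $\divi{h}=(p)-(u)$, hence $(p)\sim_\Gamma(u)$, which typically fails (on a cycle, for instance, distinct points are never linearly equivalent). The statement you actually need --- that $|D'-(u)|\ne\emptyset$ and $|D'-(v)|\ne\emptyset$ imply $|D'-(p)|\ne\emptyset$ for every $p$ on the edge $uv$ --- is true, but its proof requires either the structure theory of $p$-reduced divisors (showing, roughly, that the locus $\{q:|D'-(q)|=\emptyset\}$ cannot isolate an interior edge point from both its endpoints) or a careful tropical-convexity argument that genuinely combines the two witnesses $F_u,F_v\in|D'|$ rather than deforming just one of them. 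Since you are already developing metric reduced divisors for the Main Lemma, plan to reuse that machinery here; the naive one-chip slide will not close the argument.
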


The sets of effective divisors and principal divisors on $\Gamma$
are both strictly larger than the respective sets for $G$. Hence,
Theorem~\ref{thm-graph} is not a~priori obvious.

Theorem~\ref{thm-graph} implies a conjecture of Baker~\cite{bib-baker} that the
rank of a divisor on a loopless graph $G$ is the same as its rank on the graph
$G^k$, the graph where every edge of $G$ is $k$ times subdivided
(see~Corollary~\ref{cor-graph}). Gathmann and Kerber proved
in~\cite[Proposition~2.4]{bib-gathmann+} the following statement: Given a
divisor $D$ on a loopless graph $G$ there exist infinitely many subdivisions
$G'$ of $G$ such that $r_G(D)=r_{G'}(D)$. Corollary~\ref{cor-graph} therefore strengthens
quantification of~\cite[Proposition~2.4]{bib-gathmann+} by allowing all
possible subdivisions as opposed to just an infinite family of subdivisions.

We finish the paper by considering algorithmic applications of the
results established in Sections~\ref{sec-non-spec}
 and~\ref{sec-rank}, and design an algorithm for computing the rank
of divisors on tropical curves.

\subsection{The Riemann-Roch
criterion}\label{sec-comb-rr}

In this section, we recall an abstract criterion
from~\cite{bib-baker+} giving necessary and sufficient conditions
for the Riemann-Roch formula to hold (Theorem~\ref{thm-comb-rr}). By
Theorem~\ref{thm-trop-rr} we will be able to utilize these conditions in the context of tropical curves in
Section~\ref{sec-non-spec}. In an expanded version of
this paper~\cite{bib-hladky+full}, we proceed the other way around: we show
that the abstract conditions of Theorem~\ref{thm-comb-rr} are met for divisors
on tropical curves, thereby giving another proof of Theorem~\ref{thm-trop-rr}.

The setting for the results of this section is as follows. Let $X$
be a non-empty set, and let $\Div(X)$ be the free abelian group on
$X$. Elements of $\Div(X)$ are called {\em divisors} on $X$,
divisors $E$ with $E \geq 0$ are called {\em effective}. Let $\sim$
be an equivalence relation on $\Div(X)$ satisfying the following two
properties:
\begin{itemize}
\item[(E1)] If $D \sim D'$, then $\deg(D) = \deg(D')$.
\item[(E2)] If $D_1 \sim D_1'$ and $D_2 \sim D_2'$, then $D_1 + D_1' \sim D_2 + D_2'$.
\end{itemize}

As before, given $D \in \Div(X)$, define
$$
|D| = \{ E \in \Div(X) \; : \; E \geq 0 \textrm{ and } E \sim D \}
$$
and
$$\rank(D)= \min_{\substack{
                      E\ge 0\\
                      |D-E|=\emptyset
              }} \deg(E)-1 \; \mbox{.}$$

For a nonnegative integer $g$ (which will correspond to the
\emph{abstract genus} of $X$), let us define the set of
\emph{non-special divisors}
\begin{equation}\label{eq:DefNC}
\NC = \{ D \in \Div(X) \; : \; \deg(D) = g-1 \textrm{ and } |D| =
\emptyset \} \ .
\end{equation}
Some heed is needed when comparing our notion of non-special
divisors to the classic notion from the theory of Riemann
surfaces. Indeed, suppose that $Z$ is a compact Riemann
surface $Z$ of genus $g_Z$ with its canonical divisor $K_Z$. A
divisor $D$ on $Z$ is called \emph{special} whenever its
rank satisfies $\rank_Z(K_Z-D)\ge 0$. Thus, classically,
non-special divisors do not necessarily have rank of the
genus decreased by one, a property which will be guaranteed
by our later choice of the abstract genus $g$. However, when
we additionally assume that $\deg_Z(D)=g_Z-1$ then, by the
Riemann-Roch Theorem for Riemann surfaces, our
definition~\eqref{eq:DefNC} is consistent with the notion of
non-special divisors.

Finally, let $K$ be an element of $\Div(X)$ having degree $2g-2$.
The following theorem from~\cite{bib-baker+} gives necessary and
sufficient conditions for the Riemann-Roch formula to hold for
elements of $\Div(X) / \sim$.

\begin{theorem}
\label{thm-comb-rr} Define $\epsilon : \Div(X) \to \ZZ / 2\ZZ$ by
declaring that $\epsilon(D) = 0$ if $|D| \neq \emptyset$ and
$\epsilon(D) = 1$ if $|D| = \emptyset$. Then the Riemann-Roch
formula
\begin{equation*}
r(D) - r(K - D) = \deg(D) + 1 - g
\end{equation*}
holds for all $D \in \Div(X)$ if and only if the following two
properties are satisfied:
\begin{itemize}
\item[\rm{(RR1)}] For every $D \in \Div(X)$, there exists $\nu \in \NC$ such that
$$\epsilon(D) + \epsilon(\nu - D) = 1\;.$$
\item[\rm{(RR2)}] For every $D \in \Div(X)$ with $\deg(D) = g-1$, we have
$$\epsilon(D) + \epsilon(K-D) = 0\;.$$
\end{itemize}
\end{theorem}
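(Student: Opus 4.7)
\medskip

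\noindent\textbf{Proof proposal.} My plan is to treat the two implications separately. The easier direction is necessity: assume the Riemann-Roch formula. Substituting $\deg(D)=g-1$ yields $r(D)=r(K-D)$, and since $\epsilon(D)=0$ is equivalent to $r(D)\ge 0$, we conclude $\epsilon(D)=\epsilon(K-D)$, which is (RR2). For (RR1), given a divisor $D$, I would pick an effective divisor $E$ of minimum degree with $|D-E|=\emptyset$, so that $\deg(E)=r(D)+1$; one then verifies that one can perturb $D-E$ (or, in the symmetric case $\epsilon(D)=1$, the analogous object for $K-D$) into a divisor $\nu$ with $\deg(\nu)=g-1$ and $|\nu|=\emptyset$, so that $\nu\in\NC$. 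The parity condition $\epsilon(D)+\epsilon(\nu-D)=1$ is essentially forced by the extremality used to select $E$.

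\medskip

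The substantive direction is sufficiency. Here I would work with the defect
\[
\phi(D) \;:=\; r(D)-r(K-D)-\deg(D)-1+g,
\]
and observe the antisymmetry $\phi(D)+\phi(K-D)=0$, which follows from $\deg(K-D)=2g-2-\deg(D)$. Hence it suffices to prove the one-sided inequality $\phi(D)\ge 0$ for every $D\in\Div(X)$, i.e.
\[
r(D) \;\ge\; \deg(D)+1-g+r(K-D).
\]
To produce this inequality I would aim for the characterization
\[
r(D)+1 \;=\; \min_{\nu\in\NC,\; |D-F(\nu)|=\emptyset} \deg(F(\nu)),
\]
where $F(\nu)$ is an effective witness of the parity condition from (RR1) applied to suitable translates of $D$. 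The upper bound side is straightforward: for any $\nu\in\NC$ with $|\nu-D|\ne\emptyset$, an effective $F\sim \nu-D$ has $|D-(-F)|=|\nu|=\emptyset$, producing an effective witness of controlled degree. The lower bound side is where (RR1) enters: given any candidate $E\ge 0$ with $|D-E|=\emptyset$, apply (RR1) to a shift of $D-E$ to produce an element of $\NC$ which refines the estimate.

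\medskip

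With this characterization in hand, the inequality $\phi(D)\ge 0$ is obtained by applying (RR1) to $K-D$, extracting a non-special witness $\nu$, and then using (RR2) on the degree-$(g-1)$ divisor $\nu$ (or on a small modification of it) to transfer the parity information from the pair $(K-D,\nu-(K-D))$ to the pair $(D,\text{something})$. This is where (RR2) does its real work: it guarantees that complementation $D\mapsto K-D$ preserves the emptiness pattern of $|\cdot|$ at the critical degree $g-1$, which is exactly what is needed to match the two sides.

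\medskip

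\noindent\textbf{Main obstacle.} The hard part will be promoting the \emph{parity} information contained in (RR1) and (RR2) (which only records whether $|\cdot|$ is empty or not) into the integer-valued equality demanded by Riemann-Roch. The natural strategy is to iterate (RR1): after peeling off one effective summand, the residual divisor still admits a non-special comparison by (RR1), and each application decreases $\deg(E)$ by one; tracking this iteration carefully and ensuring that it terminates with the correct count $r(D)+1$ is the delicate combinatorial core of the argument.
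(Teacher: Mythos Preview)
The paper does not prove this theorem; it is quoted from \cite{bib-baker+} and used as a black box, so there is no in-paper proof to compare your proposal against.

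On its own merits, your outline has the right architecture for sufficiency (the antisymmetry $\phi(D)+\phi(K-D)=0$ reducing matters to a one-sided inequality, to be extracted from a rank formula over $\NC$), but the execution is not yet a proof. Your ``characterization'' is ill-formed: $F(\nu)$ is never defined, and in your upper-bound step you claim that for effective $F\sim\nu-D$ one has ``$|D-(-F)|=|\nu|=\emptyset$, producing an effective witness'' --- but $-F\le 0$, so this is not an admissible $E\ge 0$ in the definition of $r(D)$. The correct target is the formula the paper records as Lemma~\ref{lem-alt-rank}, namely $r(D)+1=\min_{D'\sim D,\;\nu\in\NC}\deg^+(D'-\nu)$; there the inequality $r(D)+1\le\deg^+(D'-\nu)$ holds because the positive part $E$ of $D'-\nu$ satisfies $D'-E\le\nu$ and hence $|D-E|=\emptyset$, while the reverse inequality is where (RR1) is genuinely used (apply it to $D-E$ for a minimizing $E$ to manufacture a suitable $\nu$ and $D'$). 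You have these two directions muddled. Your necessity argument for (RR1) is likewise only a promise (``perturb $D-E$ into a divisor $\nu$''); that perturbation is the whole content of that direction and must be supplied explicitly.
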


In addition to Theorem~\ref{thm-comb-rr}, we will later use the
following lemma from~\cite{bib-baker+} that also holds in the abstract
setting.


\begin{lemma}
\label{lem-alt-rank} If (RR1) holds, then for every $D \in \Div(X)$
we have
\begin{equation*}
r(D) = \left( \min_{\substack{D' \sim D \\
\nu \in \NC} } \deg^+(D' - \nu) \right) - 1 \ .
\end{equation*}
\end{lemma}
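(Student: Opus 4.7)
The plan is to prove the two inequalities $r(D) + 1 \le M(D)$ and $r(D) + 1 \ge M(D)$, where $M(D) := \min_{D' \sim D,\, \nu \in \NC} \deg^+(D' - \nu)$.

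For the first inequality, which should not require (RR1), I would start from any admissible pair $(D', \nu)$ and decompose $D' - \nu = E_+ - E_-$ with $E_+, E_- \ge 0$ of disjoint support, so that $\deg(E_+) = \deg^+(D' - \nu)$. Setting $E := E_+$ gives $D' - E = \nu - E_-$. If there existed an effective $F$ with $F \sim D' - E$, then by (E2) the divisor $F + E_-$ would be effective and equivalent to $\nu$, contradicting $\nu \in \NC$. Hence $|D - E| = |D' - E| = \emptyset$, and $\deg(E) = \deg^+(D' - \nu)$, which by the definition of $r(D)$ yields $r(D) + 1 \le \deg^+(D' - \nu)$; taking the minimum over $(D', \nu)$ gives $r(D) + 1 \le M(D)$.

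For the converse inequality, I would choose $E \ge 0$ achieving $|D - E| = \emptyset$ and $\deg(E) = r(D) + 1$, and apply (RR1) to the divisor $D - E$. Since $|D - E| = \emptyset$ means $\epsilon(D - E) = 1$, the hypothesis (RR1) supplies some $\nu \in \NC$ with $\epsilon(\nu - D + E) = 0$, so one can pick $F \ge 0$ equivalent to $\nu - D + E$. Then $D' := \nu + E - F$ satisfies $D' \sim D$ by (E2), and $D' - \nu = E - F$. Since $E, F \ge 0$, the pointwise bound $\max\{0, E(v) - F(v)\} \le E(v)$ gives $\deg^+(D' - \nu) \le \deg(E) = r(D) + 1$, witnessing $M(D) \le r(D) + 1$.

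The only non-trivial step, and the sole place where (RR1) is used, is producing the witness $\nu \in \NC$ for the second direction: for a given $E$ realizing the rank there is no a priori reason that any single $\nu$ should satisfy $|\nu - D + E| \neq \emptyset$, and (RR1) is precisely the hypothesis that furnishes such a $\nu$. Everything else reduces to decomposing divisors into their positive and negative parts and invoking properties (E1) and (E2) of the equivalence relation.
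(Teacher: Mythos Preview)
The paper does not give its own proof of this lemma; it is simply quoted from \cite{bib-baker+} (Baker--Norine) as a result that ``also holds in the abstract setting.'' So there is no in-paper argument to compare against.

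Your proof is correct and is, in fact, essentially the standard argument from \cite{bib-baker+}. The first inequality uses only that $|\nu|=\emptyset$: writing $D'-\nu=E_+-E_-$ with $E_\pm\ge 0$ of disjoint support, any effective $F\sim D'-E_+=\nu-E_-$ would give $F+E_-\in|\nu|$, a contradiction; hence $|D-E_+|=\emptyset$ and $r(D)+1\le\deg(E_+)=\deg^+(D'-\nu)$. The second inequality is exactly where (RR1) enters: for a rank-realizing $E$ one has $\epsilon(D-E)=1$, so (RR1) supplies $\nu\in\NC$ with $|\nu-D+E|\ne\emptyset$, and any effective $F$ in that linear system yields $D':=\nu+E-F\sim D$ with $\deg^+(D'-\nu)=\deg^+(E-F)\le\deg(E)=r(D)+1$. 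Both directions are sound, and your identification of (RR1) as needed only for the second direction is accurate.
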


\subsection{Reducing tropical curves to loopless metric
graphs}\label{sec-trop-reduce}

We finish the introductory part of the paper by reducing
the study of divisors on tropical curves to the
corresponding situation on loopless metric graphs. Let $\Gamma$
be a tropical curve, and let $\Gamma'$ be the metric graph obtained
from $\Gamma$ by removing interiors of infinite edges and their
unbounded ends. There exists a natural retraction map
$\psi_{\Gamma}: \Gamma \rightarrow \Gamma'$ that maps deleted points
of infinite edges of $\Gamma$ to the ends of those edges that belong
to $\Gamma'$ and is an identity on the points of $\Gamma'$. This map
induces a map from $\Div(\Gamma)$ to $\Div(\Gamma')$, which is
denoted by $\psi_\Gamma$. The following proposition combines the results of
Lemma 3.4, Remark 3.5, Lemma 3.6 and Remark 3.7
of~\cite{bib-gathmann+}.

\begin{proposition}\label{thm-trop-reduce}
Let $\Gamma$ be a tropical curve, and let $\Gamma'$ and
$\psi_{\Gamma}$ be defined as above. Let $D \in \Div(\Gamma)$, and
set $D'=\psi_{\Gamma}(D)$. We have $D \sim_{\Gamma} D'$,
$\deg(D)=\deg(D')$, and $\rank_{\Gamma}(D)=\rank_{\Gamma'}(D')$.
In addition, it holds that
$K_{\Gamma'}=\psi_{\Gamma}(K_\Gamma)$.
\end{proposition}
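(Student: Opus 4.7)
The plan is to do everything through a direct construction of rational functions on $\Gamma$ supported near the infinite edges. Let $e$ be an infinite edge of $\Gamma$, identified with $[0,\infty]$ so that $0$ is its finite endpoint $v_e\in \Gamma'$ and $\infty$ is its unbounded end $\infty_e$. Since $D$ has finite support, only finitely many interior points $p_1,\dots,p_k$ of $e$ carry nonzero coefficients $a_i := D(p_i)$; set also $a_\infty := D(\infty_e)$. Define $f$ on $\Gamma$ to be identically $0$ on $\Gamma'$ and, on each such edge $e$,
\[
f(x) \;=\; -a_\infty\,x \;-\; \sum_{i=1}^{k} a_i\,\min(x,p_i)\,.
\]
This is continuous (both pieces vanish at $x=0$), piecewise linear with integer slopes, and takes values $\pm\infty$ only at unbounded ends. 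A direct slope computation yields $\ord{\infty_e}{f}=a_\infty$, $\ord{p_i}{f}=a_i$, and $\ord{v_e}{f}=-a_\infty-\sum_i a_i$; summing over all infinite edges gives $\divi{f}=D-D'$. This proves $D\sim_\Gamma D'$, and then $\deg(D)=\deg(D')$ follows from $\deg(\divi{f})=0$.

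For the rank equality I rely on two easy observations: (a) the construction above shows $E\sim_\Gamma \psi_\Gamma(E)$ for every $E\in\Div(\Gamma)$; and (b) any rational function on $\Gamma'$ extends to $\Gamma$ by being constant on each infinite edge, so $\Gamma'$-equivalence between divisors in $\Div(\Gamma')$ implies their $\Gamma$-equivalence. Combined with (a) and (b), for any effective $E\in\Div(\Gamma)$ with $\deg(E)\le \rank_{\Gamma'}(D')$ we have $|D'-\psi_\Gamma(E)|_{\Gamma'}\neq\emptyset$, and lifting an effective witness via (b) together with $E\sim_\Gamma\psi_\Gamma(E)$ gives $|D'-E|_\Gamma\neq\emptyset$; hence $\rank_\Gamma(D')\ge \rank_{\Gamma'}(D')$. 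The reverse inequality rests on the sublemma I expect to be the main obstacle: if $D_1,D_2\in\Div(\Gamma')$ satisfy $D_1\sim_\Gamma D_2$, then $D_1\sim_{\Gamma'}D_2$. To prove it, take a rational function $g$ on $\Gamma$ with $\divi{g}=D_1-D_2$ and observe that $\ord{x}{g}=0$ at every interior point $x$ of every infinite edge $e$ forces $g$ to have constant slope on $e$, while $\ord{\infty_e}{g}=0$ forces that constant slope to be $0$; hence $g$ is constant on $e$, so $g|_{\Gamma'}$ is a rational function on $\Gamma'$ with divisor $D_1-D_2$. Granted the sublemma, an effective $E'\in\Div(\Gamma')$ witnessing $\rank_{\Gamma'}(D')$ also witnesses $\rank_\Gamma(D')$: any effective $F\in\Div(\Gamma)$ with $F\sim_\Gamma D'-E'$ would yield, via (a), an effective $\psi_\Gamma(F)\in\Div(\Gamma')$ that is $\Gamma$-equivalent and thus by the sublemma $\Gamma'$-equivalent to $D'-E'$, contradicting the choice of $E'$. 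This gives $\rank_\Gamma(D')\le \rank_{\Gamma'}(D')$, and combining with $\rank_\Gamma(D)=\rank_\Gamma(D')$ (from $D\sim_\Gamma D'$) completes the rank equality.

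Finally, for $K_{\Gamma'}=\psi_\Gamma(K_\Gamma)$: every unbounded end has degree $1$ in $\Gamma$, contributing coefficient $-1$ to $K_\Gamma$, while each $v\in\Gamma'$ incident to $n_v$ infinite edges satisfies $\deg_\Gamma(v)=\deg_{\Gamma'}(v)+n_v$. Pushing forward via $\psi_\Gamma$ gives coefficient $(\deg_{\Gamma'}(v)+n_v-2)+n_v(-1)=\deg_{\Gamma'}(v)-2=K_{\Gamma'}(v)$ at such $v$, while coefficients match at all other points of $\Gamma'$ directly.
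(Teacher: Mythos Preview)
Your argument is correct. The paper does not actually prove this proposition; it simply records it as a combination of Lemma~3.4, Remark~3.5, Lemma~3.6 and Remark~3.7 of Gathmann--Kerber~\cite{bib-gathmann+}, so there is no in-paper proof to compare against in detail. Your direct construction of the rational function $f$ on each infinite edge, the extension/restriction observations (a) and (b), and the sublemma that $\Gamma$-equivalence of divisors supported on $\Gamma'$ descends to $\Gamma'$-equivalence are exactly the ingredients underlying those cited results, and your computation of $\psi_\Gamma(K_\Gamma)$ is straightforwardly correct.
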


It follows from Proposition~\ref{thm-trop-reduce} that
Theorem~\ref{thm-trop-rr} restricted to metric graphs implies
Theorem~\ref{thm-trop-rr} in full generality. It also follows that
given an algorithm to compute the rank of divisors on metric graphs one
can readily design an algorithm to compute rank of divisors on
tropical curves. Based on these observations we concentrate our
further investigations on metric graphs.

Further, we shall restrict ourselves to \emph{loopless} metric graphs in
auxiliary lemmas leading to our main results. The general case can be reduced to
the loopless one by introducing a branching point of degree two on each edge.
This transformation does not change the set of divisors or their properties.

\subsection{Rank-determining sets}\label{ssec:RankDetermining}
The results of this paper have been substantially extended since the first
version of this manuscript was posted on the arXiv in 2007. Most importantly,
Luo~\cite{bib-luo} introduced the notion of rank-determining set, and using this
notion he extended Theorem~\ref{thm-graph}. Indeed, our Theorem~\ref{thm-graph}
asserts that the rank of a divisor on a (loopless) metric graph with edges of
integral lengths can be determined on a much simpler (and finite) object, that is, on a
graph. Luo's main result roughly speaking says that such a finitization is
possible even for general metric graphs. To state Luo's result precisely, we need
to give the definition of rank-determing sets which in turn builds on the
notion of restricted rank. Let $A$ be a non-empty set of points of
a metric graph $\Gamma$. We then define the \emph{$A$-restricted rank} of a
divisor $D\in \Div(\Gamma)$ by 
\begin{equation}\label{eq:defRR}
             \rank_A(D)= \min_{\substack{
                      E\in \Div(A), E\ge 0\\
                      |D-E|=\emptyset
              }} \deg(E)-1 \; \mbox{.}
\end{equation}              
The set $A$ is \emph{rank-determining} if $\rank_\Gamma(D)=\rank_A(D)$ for each
$D\in \Div(\Gamma)$.

We are now ready to state our main result concerning rank-determining sets, a result originally due to Luo~\cite{bib-luo}.\footnote{The proof of Theorem~\ref{thm-graph} as given in the original version of the manuscript (version 1 on the arXiv) can actually serve as a proof of Theorem~\ref{thm:rankdeterming} due to Luo if phrased in the right terminology.}
\begin{theorem}\label{thm:rankdeterming}
The set $B(\Gamma)$ of branching points of any loopless metric graph $\Gamma$ is rank-determining.
\end{theorem}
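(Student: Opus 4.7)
The inequality $\rank_\Gamma(D) \leq \rank_{B(\Gamma)}(D)$ is immediate from~\eqref{eq:defRR}: every effective $E^* \in \Div(B(\Gamma))$ with $|D-E^*|=\emptyset$ is in particular an effective element of $\Div(\Gamma)$ satisfying the same condition, so $\rank_{B(\Gamma)}(D)$ is the minimum of $\deg(E^*)-1$ over a subset of the feasible set for $\rank_\Gamma(D)$.

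For the reverse inequality $\rank_{B(\Gamma)}(D) \leq \rank_\Gamma(D)$, I plan to show the contrapositive: any obstacle $E$ for $\rank_\Gamma(D)$ (an effective $E$ with $|D-E|=\emptyset$) can be replaced by an obstacle of the same degree supported on $B(\Gamma)$. Proceed by induction on
\[
\iota(E) := \sum_{p \in (\supp E) \setminus B(\Gamma)} E(p).
\]
The base case $\iota(E)=0$ is trivial. For the inductive step, pick $p \in (\supp E) \setminus B(\Gamma)$; it lies in the interior of a unique edge $e$ with endpoints $u,w \in B(\Gamma)$, and the divisors $E_u := E - p + u$ and $E_w := E - p + w$ are effective, have the same degree as $E$, and satisfy $\iota(E_u), \iota(E_w) < \iota(E)$. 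It therefore suffices to prove the \emph{Key Claim}: if $E$ is an obstacle, then at least one of $E_u$, $E_w$ is also an obstacle.

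The approach to the Key Claim rests on the theory of $v$-reduced divisors on metric graphs. Let $R := (D-E+p)_p$, the $p$-reduced divisor of $D-E+p$ (if $|D-E+p|=\emptyset$, then both $|D-E_u|$ and $|D-E_w|$ are empty and the claim is immediate, so assume $R \geq 0$). The assumption $|D-E|=\emptyset$ rephrases as $R(p)=0$. Dhar's burning algorithm applied at $p$ then forces at least one of $R(u)=0$ or $R(w)=0$: a fire lit at the interior point $p$ can only escape the edge $e$ by igniting one of its endpoints, and ignition at $u$ (resp.\ $w$) from the unique incoming fire edge requires $R(u)=0$ (resp.\ $R(w)=0$); otherwise the fire is trapped inside $e$ and fails to burn the remainder of $\Gamma$, contradicting that $R$ is $p$-reduced. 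Without loss of generality, $R(u)=0$. It remains to show that this forces $|D-E_u|=\emptyset$, equivalently that the $u$-reduced divisor $R_u$ of $R$ satisfies $R_u(u)=0$.

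\textbf{Main obstacle.} The crux of the proof is thus the implication ``$R$ is $p$-reduced, $R(p)=0$, and $R(u)=0$ together imply $R_u(u)=0$''. To prove it, suppose $R_u(u)\geq 1$ for contradiction: then there exists a rational function $g$ with $\ord{u}{g}\geq 1$, $\ord{x}{g}\geq -R(x)$ for all $x$, and $\ord{p}{g}=0$ (the last equality is forced because $R(p)=0$ is already the maximal $p$-value attained over $|R|$, namely $R_p(p)$, since $R$ is $p$-reduced). The existence of such a $g$ must contradict the Dhar-burning pattern from $p$: the metric-combinatorial constraints produced by the burning at $p$ (which reached and ignited $u$) force any rational function respecting $\ord{p}{g}=0$ and the effectiveness bounds $\ord{x}{g}\geq -R(x)$ to have $\ord{u}{g}\leq 0$ as well. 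Making this precise requires the continuous, metric-graph analogue of Dhar's algorithm together with an edge-by-edge tracking of slopes along the burning trajectory from $p$; this chip-firing bookkeeping is the technical core of the proof and is, per the footnote, the argument from the original Hladk\'y--Kr\'al--Norine proof of Theorem~\ref{thm-graph}, rephrased in the language of rank-determining sets.
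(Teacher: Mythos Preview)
Your approach is genuinely different from the paper's. The paper never touches reduced divisors or Dhar's burning algorithm; it works entirely with the alignment machinery of Sections~\ref{sec-non-spec} and~\ref{sec-rank}. Lemma~\ref{lm-single} (built on Lemmas~\ref{lm-support} and~\ref{lm-positive}) produces a rank-pair $(D',P)$ in which every non-branching point $v\in\supp D'$ satisfies $D'(v)=1$ and comes after all branching points in $P$, so $\nu_P(v)=1$ and $(D'-\nu_P)(v)=0$. The non-negative part $E$ of $D'-\nu_P$ is therefore supported on $B(\Gamma)$, has $\deg(E)=\rank_\Gamma(D)+1$, and satisfies $D'-E\le\nu_P$; since $\nu_P$ is non-special this gives $|D'-E|=\emptyset$, and this single $E$ witnesses $\rank_{B(\Gamma)}(D)\le\rank_\Gamma(D)$. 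Your route, by contrast, is essentially Luo's: it trades the technical Lemmas~\ref{lm-support}--\ref{lm-single} for the (external) theory of reduced divisors, yielding a shorter argument once that theory is available.

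There is, however, a real gap in your ``Main obstacle'' paragraph, compounded by a misattribution. The footnote you invoke points to the alignment-based argument just described; it contains no chip-firing or Dhar-type bookkeeping, so you cannot defer to it to finish step~6. Fortunately the missing piece is short. From your step~5 you have not only $R(u)=0$ but $R\equiv 0$ on the entire segment $[p,u]\subset e$ (otherwise the fire from $p$ would have stopped before reaching $u$). Now suppose $R$ were not $u$-reduced; then there is a nonempty closed $S\subset\Gamma$ with $u\notin S$ and $\outdeg_S(x)\le R(x)$ for every boundary point $x$ of $S$. If $p\in S$, the boundary point of $S$ on $[p,u]$ nearest to $u$ has outdegree at least $1$ but $R$-value $0$, a contradiction; hence $p\notin S$, and then $S$ itself contradicts the $p$-reducedness of $R$. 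Thus $R$ is already $u$-reduced, so $R_u=R$ and $R_u(u)=R(u)=0$, giving $|D-E_u|=\emptyset$ directly. Replace your hand-wave with this argument and drop the appeal to the paper's own proof.
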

We give a proof of Theorem~\ref{thm:rankdeterming} in Section~\ref{sec-rank}. See also~\cite{bib-amini} for an alternative proof of Theorem~\ref{thm:rankdeterming}.

As we show in Section~\ref{sec-rank}, Theorem~\ref{thm:rankdeterming} implies
Theorem~\ref{thm-graph}. While not difficult, the argument is not entirely
trivial. In particular, mind that the condition $|D-E|=\emptyset$
in~\eqref{eq:defR} and in~\eqref{eq:defRR} refer to different equivalence
relations on the sets of divisors.

\section{Non-special divisors, alignments, and rank-pairs}\label{sec-non-spec}

As in Subsection~\ref{sec-comb-rr}, we define the set of
\emph{non-special divisors} on a metric graph $\Gamma$ to be
\[
\NC = \{ D \in \Div(\Gamma) \; : \; \deg(D) = g-1 \textrm{ and }
|D| = \emptyset \}
\]
where $g$ is the genus of $\Gamma$. The main results of this section is an
alternative formula for computing the rank of a divisor on a metric graph
(Corollary~\ref{cor-rank}).

We rely on results from~\cite{bib-mikhalkin+}. An alternative, self-contained
approach which gives as a by-product another (purely combinatorial) proof of the
Riemann-Roch theorem for tropical curves can be found in an expanded version of this paper~\cite{bib-hladky+full}. 

\subsection{A formula for the rank of a divisor}
%

We present a class of non-special divisors that is of
primary interest to us in our later considerations.
Let $P$ be an ordered sequence of finitely many points
of $\Gamma$. We say that the set of points in $P$ is the \emph{support} of $P$ and denote it by $\supp P$. The sequence $P$ can also be viewed as a linear
order $<_P$ on $\supp P$. If $B(\Gamma)\subseteq \supp P$ then $P$ is an \emph{alignment} of points of $\Gamma$. The set
of all alignments of points of $\Gamma$ is denoted by $\PP(\Gamma)$.

We now define a divisor $\nu_P$ corresponding to an alignment $P$.
A segment $L$ of $\Gamma$ is a \emph{$P$-segment} if
both ends of $L$ belong to $\supp P$, and the interior of $L$ is
disjoint from $\supp P$. For $v \in \supp P$, let $S_P(v)$ denote
the set of $P$-segments of $\Gamma$ with one end at $v$ and the other end
preceding $v$ in the order determined by $P$. Finally, let
$$\nu_P =\sum_{v \in\; \supp P}(|S_p(v)|-1)(v).$$ It is easy to
verify that $\deg(\nu_P)=g-1$, where $g$ is the genus of $\Gamma$. We start our investigation of divisors
corresponding to alignments by giving two simple propositions.

\begin{proposition}
\label{prop-perm} Let $P$ be an alignment of points of a metric
graph $\Gamma$. For every point $v$ of $\Gamma$ that is not contained in
$\supp P$, there exists an alignment $P'$ such that $\supp
P'=\supp P\cup\{v\}$ and $\nu_P=\nu_{P'}$.
\end{proposition}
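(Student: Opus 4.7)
The plan is a direct bookkeeping argument: since $v\notin\supp P$ and $B(\Gamma)\subseteq\supp P$ by definition of an alignment, the point $v$ is not a branching point, so it lies in the interior of a unique edge. Working under the loopless reduction of Subsection~\ref{sec-trop-reduce}, that edge has two distinct endpoints, and hence the unique $P$-segment $L$ whose interior contains $v$ has two distinct endpoints $a,b\in\supp P$; assume without loss of generality that $a<_P b$. Splitting at $v$ replaces $L$ by the two segments $L_1$ (with ends $a,v$) and $L_2$ (with ends $v,b$), while every other $P$-segment remains a $P'$-segment, where $P'$ is any alignment on $\supp P\cup\{v\}$.

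The second step is to make a careful choice of $P'$: take the order $<_P$ and insert $v$ immediately after $a$, so that $a<_{P'} v<_{P'} b$ and the relative order on $\supp P$ is preserved. It then suffices to verify that $|S_{P'}(u)|=|S_P(u)|$ for every $u\in\supp P$ and that $|S_{P'}(v)|=1$; the latter guarantees that the new summand $(|S_{P'}(v)|-1)(v)$ vanishes, matching the fact that $v$ contributed nothing to $\nu_P$.

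For the verification, only $a$ and $b$ are affected among $\supp P$. At $a$ the segment $L$ was not in $S_P(a)$ because $b\not<_P a$; the replacement $L_1$ is not in $S_{P'}(a)$ because $v\not<_{P'} a$; hence $|S_{P'}(a)|=|S_P(a)|$. At $b$ the segment $L$ was in $S_P(b)$ because $a<_P b$; the replacement $L_2$ is in $S_{P'}(b)$ because $v<_{P'} b$, and $L_1$ is not even incident to $b$; hence $|S_{P'}(b)|=|S_P(b)|$. Finally, the two $P'$-segments incident to $v$ are exactly $L_1$ and $L_2$, and $a<_{P'}v<_{P'}b$ places exactly one of their other endpoints before $v$, namely $a$; thus $|S_{P'}(v)|=1$.

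The main obstacle is essentially non-existent once the loopless assumption is in force: the only potentially delicate scenario is a $P$-segment whose two endpoints coincide (a ``cycle-segment''), in which case the insertion strategy above would fail to preserve $\nu_P$ at the common endpoint and at $v$ simultaneously. Ruling this out is precisely where the reduction in Subsection~\ref{sec-trop-reduce} is used, since in a loopless metric graph every segment is a subinterval of a single non-loop edge and therefore automatically has two distinct endpoints.
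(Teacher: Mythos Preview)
Your argument is correct and is precisely the approach of the paper: the paper's entire proof is the single sentence ``insert $v$ in the sequence $P$ between the (distinct) boundary points of the (unique) segment containing $v$,'' and you carry out exactly this insertion (choosing the specific slot immediately after $a$, which is one valid position between $a$ and $b$) together with the routine verification that $\nu_P=\nu_{P'}$. Your remark about the loopless reduction is also on point---the paper's parenthetical ``(distinct)'' silently invokes the same assumption.
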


\begin{proof}
Such an alignment $P'$ can be obtained by inserting the point $v$
in the sequence $P$ between the (distinct) boundary points of the
(unique) segment containing $v$.
\end{proof}

\begin{proposition}[{\cite[Lemma~7.8]{bib-mikhalkin+}}] \label{prop-permnoneff} If $P$ is an
alignment of points of a metric graph $\Gamma$, then $\nu_P \in \NC$.
\end{proposition}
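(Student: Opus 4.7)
The plan is to verify the two conditions defining $\NC$: $\deg(\nu_P) = g-1$ and $|\nu_P|=\emptyset$. The degree is a short Euler-characteristic calculation. Subdivide $\Gamma$ at $\supp P$ to obtain a finite graph $\tilde G$ whose vertex set is $\supp P$ and whose edges are the $P$-segments; its cyclomatic number is still $g$, so the number of $P$-segments equals $g+|\supp P|-1$. Each $P$-segment contributes to $S_P(v)$ for exactly one $v$, namely its later endpoint under $<_P$, so $\sum_v |S_P(v)| = g+|\supp P|-1$, and subtracting $|\supp P|$ yields $\deg(\nu_P)=g-1$.

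For $|\nu_P|=\emptyset$ I would argue by contradiction, assuming a rational function $f$ on $\Gamma$ with $\nu_P + \divi{f}\ge 0$. Set $M=\max_\Gamma f$ and $U=f^{-1}(M)$. The first observation is that $\partial U\subseteq \supp P$: at any $x\in\partial U$ all outgoing slopes of $f$ are $\le 0$ with at least one strictly negative, so $\divi{f}(x)<0$, forcing $\nu_P(x)>0$ and hence $x\in\supp P$. Pick any connected component $U_0$ of $U$. If $U_0\cap\supp P$ were empty, then $\partial U_0\subseteq\partial U\cap U_0=\emptyset$, so $U_0=\Gamma$ and $f$ would be constant; but then $\nu_P$ would itself be effective, contradicting $\nu_P(v_1)=-1$ for $v_1$ the $<_P$-first point of $\supp P$.

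The crucial local analysis takes place at $v^*$, the $<_P$-minimum point of $U_0\cap\supp P$. I would classify each $P$-segment $L$ incident to $v^*$ into two types. If $L\subseteq U_0$, the other endpoint $w\in U_0\cap\supp P$ satisfies $v^*<_P w$ by minimality, so $L\notin S_P(v^*)$; moreover $f\equiv M$ on $L$, so the outgoing slope of $f$ at $v^*$ along $L$ is $0$. If $L\not\subseteq U_0$, then $f<M$ on some punctured neighborhood of $v^*$ inside $L$, because any intermediate point at which $f$ dropped from $M$ would lie in $\partial U\setminus\supp P$, which was already ruled out; hence the outgoing slope of $f$ at $v^*$ along $L$ is at most $-1$. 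Writing $k_2$ for the number of segments of the second type, this gives $\divi{f}(v^*)\le -k_2$, which combined with the effectivity condition $\divi{f}(v^*)\ge 1-|S_P(v^*)|$ yields $k_2\le |S_P(v^*)|-1$.

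The contradiction then closes: any $L\in S_P(v^*)$ has its other end strictly below $v^*$ in $<_P$, and by the minimality of $v^*$ that endpoint cannot lie in $U_0$, so $L\not\subseteq U_0$, i.e., $L$ is of the second type. Hence $|S_P(v^*)|\le k_2\le |S_P(v^*)|-1$, which is absurd. I expect the main obstacle to be establishing the opening reduction $\partial U\subseteq\supp P$ cleanly; once this is in hand the problem localizes to the acyclic orientation of $\tilde G$ and the rest is accounting around the chosen local minimum $v^*$.
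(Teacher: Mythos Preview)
The paper does not supply its own proof of this proposition; it is quoted verbatim from \cite[Lemma~7.8]{bib-mikhalkin+} and used as a black box. So there is nothing in the present paper to compare your argument against.

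Your proof is correct and is essentially the standard argument (and, in particular, the one in~\cite{bib-mikhalkin+}): look at the closed set $U$ where a putative equalising rational function $f$ attains its maximum, observe that boundary points of $U$ must lie in $\supp P$ because $\nu_P$ vanishes off $\supp P$, and then derive a contradiction at the $<_P$-least point $v^*$ of a component $U_0$ of $U$. The key bookkeeping --- that every segment in $S_P(v^*)$ must exit $U_0$ and hence contribute outgoing slope at most $-1$ at $v^*$, while $(\nu_P+\divi{f})(v^*)\ge 0$ forces at most $|S_P(v^*)|-1$ such segments --- is exactly the acyclic-orientation counting that underlies both the metric proof in~\cite{bib-mikhalkin+} and the graph-theoretic version in~\cite{bib-baker+}. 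One cosmetic point: your reduction ``$\partial U_0\subseteq\partial U$'' uses that $\Gamma$ is locally connected so that components of the closed set $U$ are themselves closed and open in $U$; this is routine but worth a word if you write it out.
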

%

The next fact asserts that
every divisor is either equivalent to an effective divisor, or is
equivalent to a divisor dominated by $\nu_P$ for some alignment
$P$, and not both.

\begin{corollary}[{\cite[Corollary~7.9]{bib-mikhalkin+}}]
\label{cor-nu-ineq} Let $\Gamma$ be a metric graph. For every $D \in
\Div(G)$, exactly one of the following holds
\begin{enumerate}
\item[(a)] $r(D) \ge 0$; or
\item[(b)] $r(\nu_P - D) \ge 0$ for some alignment $P$.
\end{enumerate}
\end{corollary}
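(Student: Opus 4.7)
My plan splits the statement into its two halves. For mutual exclusivity, suppose both (a) and (b) hold for some alignment $P$, so $D\sim E_1$ and $\nu_P-D\sim E_2$ for some effective $E_1,E_2$. Adding these equivalences yields $\nu_P\sim E_1+E_2\ge 0$, and hence $|\nu_P|\ne\emptyset$, contradicting Proposition~\ref{prop-permnoneff} which asserts $\nu_P\in\NC$.

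The substantive half is to show that if $|D|=\emptyset$, then some alignment $P$ satisfies $|\nu_P-D|\ne\emptyset$. I would adapt the reduced-divisor machinery of Baker--Norine~\cite{bib-baker+} to the continuous setting of~\cite{bib-mikhalkin+}. Fix an arbitrary point $v_0\in\Gamma$ and produce a representative $D'\sim D$ that is \emph{$v_0$-reduced}: $D'\ge 0$ on $\Gamma\setminus\{v_0\}$, and no closed subset of $\Gamma\setminus\{v_0\}$ can be fired (in the tropical chip-firing sense) without violating effectivity off $v_0$. Such a $D'$ exists by a tropical Dhar-style burning procedure from $v_0$, and the hypothesis $|D|=\emptyset$ forces $D'(v_0)<0$.

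The alignment $P$ is then read off the burning procedure: burning from $v_0$ produces a time-stratification of $\Gamma$, and I would list the boundary points of the strata (together with $B(\Gamma)$) in reverse burn order, placing $v_0$ last, to obtain an alignment. A slope accounting along each $P$-segment shows that the local contribution $|S_P(v)|-1$ to $\nu_P$ matches the number of chips pushed into $v$ from later strata minus one, so summing over segments yields a rational function $f$ witnessing $\nu_P-D-\divi{f}\ge 0$. The main obstacle is this last bookkeeping step: one has to check that the slopes chosen on adjacent segments agree at each $P$-point, so that $f$ is a well-defined continuous piecewise-linear function on $\Gamma$. Since the statement is Corollary~7.9 of~\cite{bib-mikhalkin+}, I would defer to the full verification carried out there.
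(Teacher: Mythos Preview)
The paper does not supply its own proof of this corollary: it is imported verbatim from Mikhalkin--Zharkov, and the surrounding text only remarks that it follows from Proposition~\ref{prop-permnoneff} (which handles the ``at most one'' direction) before referring the reader to~\cite{bib-mikhalkin+,bib-hladky+full} for the details. In that sense your proposal already does more than the paper: your mutual-exclusivity argument is exactly the intended application of Proposition~\ref{prop-permnoneff}, and your outline of the existence direction via $v_0$-reduced divisors and a Dhar-type burning procedure is indeed the route taken in the cited references.

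There is one concrete slip in your sketch worth flagging. The alignment $P$ should be the \emph{burn order itself}, with $v_0$ first, not the reverse order with $v_0$ last. In the discrete model, if $u_1=v_0,u_2,\ldots,u_n$ is the order in which vertices burn, then the burning condition gives, for each $i\ge 2$, that $D'(u_i)$ is strictly less than the number of edges from $u_i$ to $\{u_1,\ldots,u_{i-1}\}$; that is, $D'(u_i)\le\nu_P(u_i)$ for $P=(u_1,\ldots,u_n)$. Combined with $D'(v_0)\le -1=\nu_P(v_0)$ this already yields $\nu_P-D'\ge 0$, so no further rational function $f$ is required --- the function implicit in $D'\sim D$ suffices. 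Reversing the order would compare $D'(u_i)$ with edges toward vertices burned \emph{after} $u_i$, about which the burning procedure says nothing. The tropical version carries the same orientation. Since you ultimately defer the verification to~\cite{bib-mikhalkin+}, this does not break your proposal, but the orientation of the alignment is precisely the ``bookkeeping'' that makes the argument work.
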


\begin{corollary}[{\cite[Corollary~7.10]{bib-mikhalkin+}}]
\label{cor-nu-eq} If $\nu$ is a non-special divisor on a metric
graph $\Gamma$ of genus $g$, then $\nu \sim \nu_P$ for some
alignment $P$ of a finite set of points of $\Gamma$.
\end{corollary}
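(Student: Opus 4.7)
The plan is to deduce the statement directly from Corollary~\ref{cor-nu-ineq}, which is really the only input needed. Applied to $D=\nu$, that corollary provides a dichotomy: either $r(\nu)\ge 0$, or else $r(\nu_P-\nu)\ge 0$ for some alignment $P$. Since $\nu$ is non-special we have $|\nu|=\emptyset$ and hence $r(\nu)<0$; the first alternative therefore fails, and so there must exist an alignment $P$ with $|\nu_P-\nu|\neq \emptyset$.

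Next, I would pin down such an effective representative via a degree count. Pick any effective divisor $E\sim \nu_P-\nu$. Linear equivalence on $\Div(\Gamma)$ preserves degree, and by the observation recorded just after the definition of $\nu_P$ we have $\deg(\nu_P)=g-1$; combined with $\deg(\nu)=g-1$ from the non-special hypothesis, this gives $\deg(E)=0$. The only effective divisor of degree zero is the zero divisor, so $E=0$ and $\nu_P\sim \nu$, which is exactly the desired conclusion.

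There is essentially no obstacle here, since the substantive content has been absorbed into Corollary~\ref{cor-nu-ineq} (and ultimately into the Mikhalkin--Zharkov machinery that underlies it). The only thing to be slightly careful about is the degree identity $\deg(\nu_P)=g-1$, but this is already asserted in the paragraph defining $\nu_P$ and follows from a direct count of $P$-segments around each point of $\supp P$, which totals $|E(\Gamma)|=g+|V(\Gamma)|-1$ with a $-1$ contribution per point of $\supp P$, leaving $g-1$ after cancellation.
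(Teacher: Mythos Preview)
Your argument is correct and matches the paper's own derivation: the paper states that Corollary~\ref{cor-nu-eq} ``in turn follows from Corollary~\ref{cor-nu-ineq} applied to non-special divisors,'' and you have carried out precisely that deduction. The one minor quibble is that your parenthetical count of $P$-segments as $|E(\Gamma)|$ is only literally correct when $\supp P=B(\Gamma)$; in general there are $|E(\Gamma)|+|\supp P\setminus B(\Gamma)|$ many $P$-segments, but the extra term cancels against the extra $-1$ contributions, so the conclusion $\deg(\nu_P)=g-1$ stands (and, as you note, is already recorded in the paper).
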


Corollary~\ref{cor-nu-ineq} is a consequence of
Proposition~\ref{prop-permnoneff}. Corollary~\ref{cor-nu-eq} in turn follows from Corollary~\ref{cor-nu-ineq} applied to non-special divisors. The arguments to derive these corollaries are rather straightforward. We refer the reader to~\cite{bib-mikhalkin+,bib-hladky+full}.

We finish this section with establishing a formula for rank of divisors
on metric graphs that will be central in our later analysis of the rank.

\begin{corollary}
\label{cor-rank} If $D$ is a divisor on a metric graph $\Gamma$, then the following formula holds:
\begin{equation}\label{eq:smMin}
\rank(D)=\min_{\substack{
                      D'\sim D\\
              P\in \PP(\Gamma)
              }} \deg^+ (D'-\nu_P) -1.
\end{equation}              
\end{corollary}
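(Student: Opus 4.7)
The plan is to derive Corollary~\ref{cor-rank} from the abstract Lemma~\ref{lem-alt-rank} combined with Corollary~\ref{cor-nu-eq}, which tells us that non-special divisors are, up to linear equivalence, exactly the divisors $\nu_P$ coming from alignments.

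First, I would verify condition (RR1) of Theorem~\ref{thm-comb-rr} in the setting where the underlying set is the point set of $\Gamma$ and the equivalence relation is $\sim_\Gamma$. Fix $D\in\Div(\Gamma)$ and apply Corollary~\ref{cor-nu-ineq}. If alternative~(a) holds, i.e.\ $|D|\neq\emptyset$, then~(b) fails, so $|\nu_P - D|=\emptyset$ for every alignment $P$; choosing any such $P$ (which satisfies $\nu_P\in\NC$ by Proposition~\ref{prop-permnoneff}) gives $\epsilon(D)+\epsilon(\nu_P-D)=0+1=1$. If~(b) holds, the alignment $P$ supplied by Corollary~\ref{cor-nu-ineq} satisfies $|\nu_P-D|\neq\emptyset$ while $|D|=\emptyset$, which again yields $\epsilon(D)+\epsilon(\nu_P-D)=1$. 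Hence (RR1) holds (with a witness of the form $\nu_P$), and Lemma~\ref{lem-alt-rank} becomes applicable.

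Lemma~\ref{lem-alt-rank} then gives
$$\rank(D)=\min_{\substack{D'\sim D\\ \nu\in\NC}}\deg^+(D'-\nu)-1.$$
It remains to show that this minimum coincides with the one appearing on the right-hand side of~\eqref{eq:smMin}. The direction ``$\le$'' is immediate, since Proposition~\ref{prop-permnoneff} guarantees $\{\nu_P:P\in\PP(\Gamma)\}\subseteq\NC$, so the minimum over $P$ is taken over a subfamily of the minimum over $\NC$. For the reverse direction, take any $\nu\in\NC$ and, using Corollary~\ref{cor-nu-eq}, pick an alignment $P$ together with a rational function $f$ on $\Gamma$ satisfying $\nu=\nu_P+\divi{f}$. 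For any $D'\sim D$, set $D'':=D'-\divi{f}$; then $D''\sim D$ and $D''-\nu_P=D'-\nu$, so $\deg^+(D''-\nu_P)=\deg^+(D'-\nu)$. Thus every value attained by the $\NC$-minimum is matched by the $\PP(\Gamma)$-minimum, giving ``$\ge$'' as well.

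There is no real obstacle in this plan: once (RR1) has been checked from Corollary~\ref{cor-nu-ineq} (which is the only mildly non-trivial step, and which the paper already extracts from~\cite{bib-mikhalkin+}), the remainder is bookkeeping — invoking Lemma~\ref{lem-alt-rank} and then replacing $\NC$ by $\{\nu_P:P\in\PP(\Gamma)\}$ via the translation-invariance of $\deg^+(\cdot-\nu)$ under principal divisors together with Corollary~\ref{cor-nu-eq}.
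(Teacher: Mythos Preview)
Your argument is correct and structurally identical to the paper's: establish (RR1), invoke Lemma~\ref{lem-alt-rank} to get the formula with $\nu\in\NC$, then pass from $\NC$ to $\{\nu_P:P\in\PP(\Gamma)\}$ by translating $D'$ by the principal divisor supplied by Corollary~\ref{cor-nu-eq}. The one substantive difference is how (RR1) is obtained. The paper appeals to the Riemann--Roch theorem for tropical curves (Theorem~\ref{thm-trop-rr}) and reads off (RR1) via the equivalence in Theorem~\ref{thm-comb-rr}; you instead verify (RR1) directly from the dichotomy in Corollary~\ref{cor-nu-ineq}, exhibiting in each case a witness of the form $\nu_P$. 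Your route is more elementary in that it does not rely on the full Riemann--Roch formula, and it is precisely the direction the authors say they take in the expanded version~\cite{bib-hladky+full}; the paper's route is shorter here only because Theorem~\ref{thm-trop-rr} is already available as a black box.
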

\begin{proof}The Riemann-Roch formula holds for divisors on metric graphs
(Theorem~\ref{thm-trop-rr}), and hence condition~(RR1) from
Theorem~\ref{thm-comb-rr} is satisfied. Lemma~\ref{lem-alt-rank} can
be applied and we infer that
\begin{equation}\label{eq:bigMin}
r(D) = \min_{\substack{D' \sim D \\
\nu \in \NC} } \deg^+(D' - \nu)  - 1 \ .
\end{equation}

By Proposition~\ref{prop-permnoneff}, the minimum
in~\eqref{eq:smMin} is taken over smaller set of parameters than
the minimum in~\eqref{eq:bigMin}. Hence, it is enough to show that there
exist $D''\sim D$ and $P \in \PP(\Gamma)$ such that $\rank(D)=\deg^+ (D''-\nu_P)-1$. Let $D' \sim D$ and $\nu \in \NC$ be chosen so that
$\rank(D)=\deg^+ (D'-\nu)-1$.

By Corollary~\ref{cor-nu-eq}, we have $\nu \sim \nu_P$ for some
alignment $P$ of points of $\Gamma$. Setting $D''=D'+(\nu_P-\nu)$
yields
$$\rank(D)=\deg^+ (D'-\nu)-1=\deg^+(D''-\nu_P)-1,$$
as desired.
\end{proof}
Motivated by Corollary~\ref{cor-rank}, we say that the pair
$(D',P)$ is a \emph{rank-pair for} $D$ if $D'\sim D$, and
$\rank(D)=\deg^+ (D'-\nu_P) -1$.

%
%

\medskip

Note that the result analogous to Corollary~\ref{cor-rank} also
holds for non-metric graphs, as shown in~\cite{bib-baker+}. Let
$\PP(G)$ denote the set of all alignments of $V(G)$. As in the
case of metric graphs, we can define the divisor $\nu_P$
corresponding to $P \in \PP(G)$ by setting $\nu_P(v)$ to be equal to
the number of edges from $v$ to vertices in $V(G)$ preceding $v$,
decreased by one. The next formula for the rank of a divisor on a
finite graph $G$ was established by Baker and
the last author~\cite{bib-baker+}.

\begin{lemma}
\label{lm-rank} The following formula holds for the rank of every
divisor $D$ on a graph $G$:
\begin{equation}\label{eq:Vecer}
\rank(D)=\min_{\substack{
                  D'\sim D\\
                  P \in \PP(G)
                 }} \deg^+ (D'-\nu_P) -1 \;\mbox{.}
\end{equation}                 
\end{lemma}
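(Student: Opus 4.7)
The plan is to mirror the proof of Corollary~\ref{cor-rank} verbatim, replacing metric-graph ingredients with their graph-theoretic counterparts. Since the Riemann-Roch theorem for graphs (Theorem~\ref{thm-graph-rr}) holds, the abstract condition (RR1) of Theorem~\ref{thm-comb-rr} is satisfied when we specialize the setting of Subsection~\ref{sec-comb-rr} to $X=V(G)$, with $\sim$ as the linear equivalence $\sim_G$ and $g$ as the cyclomatic number of $G$. Hence Lemma~\ref{lem-alt-rank} applies and gives
\[
\rank(D) \;=\; \min_{\substack{D'\sim D \\ \nu\in \NC_G}} \deg^+(D'-\nu) \;-\; 1,
\]
where $\NC_G=\{\nu\in\Div(G) : \deg(\nu)=g-1,\ |\nu|=\emptyset\}$. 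It then suffices to show that the minimum on the right may be restricted to divisors of the form $\nu_P$ with $P\in \PP(G)$.

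To carry this out, I would establish the two graph analogues of Proposition~\ref{prop-permnoneff} and Corollary~\ref{cor-nu-eq}:
\begin{itemize}
\item[(a)] For every $P\in \PP(G)$, we have $\nu_P\in \NC_G$.
\item[(b)] For every $\nu\in \NC_G$, there exists $P\in \PP(G)$ with $\nu\sim_G \nu_P$.
\end{itemize}
For (a), counting edges by their later endpoint in the order $<_P$ gives $\sum_{v\in V(G)} |S_P(v)|=|E(G)|$, so $\deg(\nu_P)=|E(G)|-|V(G)|=g-1$. To see that $|\nu_P|=\emptyset$, suppose for contradiction that $\nu_P+\divi{f}\ge 0$ for some $f:V(G)\to\ZZ$; take the $<_P$-maximum vertex $v$ on which $f$ attains its minimum, and observe that every edge from $v$ to a vertex preceding $v$ contributes at most $0$ to $(\nu_P+\divi{f})(v)$ while the remaining contribution is bounded by $|S_P(v)|-1-\deg_G(v)<0$, a contradiction.

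For (b), which I expect to be the main obstacle, I would use the theory of $q$-reduced divisors from~\cite{bib-baker+}. Fix any $q\in V(G)$, and let $\nu_q$ be the unique $q$-reduced divisor equivalent to $\nu$. A standard application of Dhar's burning algorithm to $\nu_q$ (together with $\deg(\nu_q)=g-1$ and $|\nu_q|=\emptyset$, which forces $\nu_q(q)=-1$) produces a linear ordering $v_1<v_2<\cdots<v_n$ of $V(G)$ with $v_n=q$ in which $\nu_q(v_i)+1$ equals the number of edges from $v_i$ to $\{v_1,\dots,v_{i-1}\}$; that ordering is precisely the alignment $P$ we need, and it satisfies $\nu_P=\nu_q\sim_G \nu$.

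Having (a) and (b), the conclusion is automatic. Indeed (a) shows that the minimum in~\eqref{eq:Vecer} is taken over a subset of the one given by Lemma~\ref{lem-alt-rank}, so the right-hand side of~\eqref{eq:Vecer} is at least $\rank(D)$; and (b), applied to any $\nu$ realizing the minimum in Lemma~\ref{lem-alt-rank} together with the shift $D''=D'+(\nu_P-\nu)$, yields a pair $(D'',P)$ with $\deg^+(D''-\nu_P)-1=\rank(D)$, giving the reverse inequality. This mirrors the final paragraph of the proof of Corollary~\ref{cor-rank}.
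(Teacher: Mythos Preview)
Your plan is exactly the derivation the paper sketches immediately after stating the lemma: deduce (RR1) from the graph Riemann--Roch theorem (Theorem~\ref{thm-graph-rr}), apply Lemma~\ref{lem-alt-rank}, and then restrict the minimum from all of $\NC_G$ to the divisors $\nu_P$ via the graph analogues of Proposition~\ref{prop-permnoneff} and Corollary~\ref{cor-nu-eq}, which the paper simply attributes to~\cite{bib-baker+} rather than reproving.

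Two small slips in your sketches are worth fixing. In~(a) you must take the $<_P$-\emph{smallest} vertex $v$ at which $f$ attains its minimum, not the largest: with the paper's convention $\divi{f}(v)=\sum_{e=vw}(f(v)-f(w))$, every vertex $w<_P v$ then satisfies $f(w)>f(v)$, so each of the $|S_P(v)|$ edges to a preceding vertex contributes at most $-1$ to $\divi{f}(v)$, giving $(\nu_P+\divi{f})(v)\le(|S_P(v)|-1)-|S_P(v)|=-1$. With your choice of the $<_P$-largest minimizer, an edge to a preceding vertex contributes $1+(f(v)-f(w))\le 1$, not $\le 0$, and the resulting bound $2|S_P(v)|-1-\deg_G(v)$ need not be negative (e.g.\ when $v$ is $<_P$-last). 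In~(b) the burning order should have $v_1=q$, not $v_n=q$, so that $\nu_P(q)=-1$ matches $\nu_q(q)$; with $q$ last you would get $\nu_P(q)=\deg_G(q)-1$. Finally, as the paper itself notes, this route only handles loopless $G$: Theorem~\ref{thm-graph-rr} is stated for loopless graphs, and your count $\sum_v|S_P(v)|=|E(G)|$ fails in the presence of loops.
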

It should be remarked that Lemma~\ref{lm-rank} follows in a
straightforward fashion from the results above; at least for loopless graphs.
Indeed, we use the Riemann-Roch Theorem~\ref{thm-graph-rr} to conclude that
property~(RR1) in Theorem~\ref{thm-comb-rr} holds. Then
Lemma~\ref{lm-rank} is just Lemma~\ref{lem-alt-rank}.
In~\cite{bib-baker+}, where these results were first obtained
the arguments proceeded the other way round: the Riemann-Roch
Theorem~\ref{thm-graph-rr} was established as a consequence of
abstract combinatorial criteria from
Theorem~\ref{thm-comb-rr}.

\section{Rank of divisors on metric graphs}\label{sec-rank}

In this section we show that the divisor and the alignment in
Corollary~\ref{cor-rank} can be assumed to have a very special
structure. We establish a series of lemmas strengthening our
assumptions on this structure. It will then follow from our results
that the rank of a divisor on a graph and on the corresponding
metric graph are the same, thereby establishing
Theorem~\ref{thm-graph}.

\begin{lemma}
\label{lm-support} Let $D$ be a divisor on a loopless metric graph
$\Gamma$. Suppose there exists an
alignment $P$ of points of $\Gamma$ such that $\rank (D)=\deg^+ (D-\nu_P)-1$.
Then there also exists an alignment $P'$ of the points of $B(\Gamma)\cup \supp D$ such that
$\rank(D)=\deg^+ (D-\nu_{P'}) -1$.
\end{lemma}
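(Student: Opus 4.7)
My plan is to modify $P$ in two stages: first enlarge $\supp P$ to include $\supp D$ without altering $\nu_P$, then iteratively prune away points of the support that lie outside $B(\Gamma)\cup\supp D$.

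For the first stage, repeatedly apply Proposition~\ref{prop-perm}: for each point $w\in\supp D\setminus\supp P$, insert $w$ into the linear order of $P$ between the endpoints of the segment containing $w$, without changing the associated divisor. After finitely many insertions we obtain an alignment $P_0$ with $\supp P_0=\supp P\cup\supp D$ and $\nu_{P_0}=\nu_P$; hence $(D,P_0)$ is still a rank-pair.

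For the second stage, I claim that whenever $Q$ is an alignment with $(D,Q)$ a rank-pair and $v\in\supp Q\setminus(B(\Gamma)\cup\supp D)$, the alignment $Q'$ obtained by deleting $v$ from $Q$ (keeping the order on the remaining points) has $(D,Q')$ again a rank-pair. Granting this, we iterate starting from $P_0$, removing one extraneous point at a time, until the support is exactly $B(\Gamma)\cup\supp D$. To prove the claim, observe that $v$ lies in the interior of a single edge $e$, whose endpoints are distinct since $\Gamma$ is loopless. The two $Q$-segments at $v$ lie in $e$ and terminate at points $u_1\ne u_2$ of $\supp Q$ (again by looplessness). Writing $u_1<_Q u_2$ without loss of generality, a direct computation splits into three cases according to the position of $v$ relative to $u_1,u_2$ and yields
\[
\nu_{Q'}-\nu_Q \;=\; \begin{cases}(v)-(u_1)&\text{if }v<_Q u_1,\\ 0&\text{if }u_1<_Q v<_Q u_2,\\ (u_2)-(v)&\text{if }u_2<_Q v.\end{cases}
\]
Since $D(v)=0$, a short analysis of how $\max\{0,\cdot\}$ interacts with shifting a single coordinate by $\pm 1$ shows $\deg^+(D-\nu_{Q'})\le\deg^+(D-\nu_Q)$ in every case. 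On the other hand, $Q'$ is an alignment (as $B(\Gamma)\subseteq\supp Q'$), so Corollary~\ref{cor-rank} gives the reverse inequality $\deg^+(D-\nu_{Q'})\ge\rank(D)+1=\deg^+(D-\nu_Q)$. Equality follows, and the claim is proved.

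\textbf{Main obstacle.} The heart of the argument is the case analysis tracking how $\deg^+(D-\nu_Q)$ behaves under deletion of $v$: the coefficient of $D-\nu_Q$ at $v$ equals $1$, $0$, or $-1$ in the three cases respectively, and the compensating shift of $\pm 1$ at the neighbouring $u_i$ can at best recover this loss. Verifying carefully that the net change in $\deg^+$ is always non-positive, and then combining with the minimality afforded by Corollary~\ref{cor-rank} to upgrade this to equality, is the only delicate part of the proof.
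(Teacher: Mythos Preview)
Your proof is correct and follows essentially the same approach as the paper: enlarge the support via Proposition~\ref{prop-perm}, then remove extraneous points one at a time using the same three-case analysis of how $\nu$ changes when a degree-two point $v$ (with $D(v)=0$) is deleted. The only cosmetic difference is that the paper phrases the pruning as an extremal argument (choose $P'$ of minimal support and derive a contradiction), whereas you phrase it as an explicit iteration and invoke Corollary~\ref{cor-rank} directly for the reverse inequality; the underlying computation is identical.
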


\begin{proof}
By Proposition~\ref{prop-perm}, we can assume that the support of
$P$ contains all the points of $B(\Gamma)\cup\supp D$.
Choose among all alignments $P'$ satisfying $\rank(D)=\deg^+ (D-\nu_{P'})-1$
and $B(\Gamma)\cup\supp D\subseteq \supp P'$ an alignment
such that $|\supp P'|$ is minimal.

If $\supp P'=B(\Gamma)\cup\supp D$, then the lemma holds.
Assume that there exists a point $v_0\in\supp P'\setminus
(B(\Gamma)\cup\supp D)$. Let $v_1,v_2 \in \supp P'$ be
such that the segments in $\Gamma$ with ends $v_0$ and $v_i$, for $i=1,2$, contain
no other points of $\supp P'$. 
We can assume by symmetry that
$v_1<_{P'} v_2$.

Consider now the alignment $P''$ obtained from $P'$ by removing
the point $v_0$. We shall distinguish three cases based on the
mutual order of $v_0$, $v_1$ and $v_2$ in $P'$, and conclude in each
of the cases that $\deg^+(D-\nu_{P''})\le\deg^+(D-\nu_{P'})$. This,
together with the fact that $\supp P'' \subsetneq \supp P'$ will
contradict the choice of $P'$.

If $v_0<_{P'} v_1$ and $v_0<_{P'} v_2$, then $\nu_{P'}(v_0)=-1$.
Observe that $\nu_{P''}(v_1)=\nu_{P'}(v_1)-1$, $\nu_{P''}(v_0)=0$,
and $\nu_{P''}(v)=\nu_{P'}(v)$ for $v\not=v_0,v_1$. We infer that
\begin{align*}
\deg^+&(D-\nu_{P'})-\deg^+(D-\nu_{P''}) =
\\ &= 1 + \max\{D(v_1)-\nu_{P'}(v_1),0\}-\max\{D(v_1)-\nu_{P'}(v_1)+1,0\}
\ge 0\;\mbox{.}
\end{align*}
Therefore $\deg^+(D-\nu_{P''})\le\deg^+(D-\nu_{P'})$.

If $v_1<_{P'} v_0<_{P'} v_2$, then $\nu_{P'}=\nu_{P''}$ and again
$\deg^+(D-\nu_{P''})=\deg^+(D-\nu_{P'})$.

It remains to consider the case $v_1<_{P'} v_0$ and $v_2<_{P'} v_0$.
Observe that $\nu_{P'}(v_0)=1$, $\nu_{P''}(v_0)=0$,
$\nu_{P''}(v_2)=\nu_{P'}(v_2)+1$, and
$\nu_{P''}(v)=\nu_{P'}(v)$ for $v\not=v_0,v_2$.
We conclude that
\begin{align*}
\deg^+&(D-\nu_{P'})-\deg^+(D-\nu_{P''})=\\
 &=\max\{D(v_2)-\nu_{P'}(v_2),0\}-\max\{D(v_2)-\nu_{P'}(v_2)-1,0\}
  \ge 0\;\mbox{.}
\end{align*}
Consequently, $\deg^+(D-\nu_{P''})\le\deg^+(D-\nu_{P'})$.
\end{proof}

Next, we show that the divisor $D' \sim D$ that minimizes $\min_{P
\in \PP(\Gamma)} \deg^+ (D'-\nu_P)$ can be assumed to be
non-negative everywhere except for the points of $B(\Gamma)$.

\begin{lemma}
\label{lm-positive} Let $D$ be a divisor on a loopless metric graph
$\Gamma$. 
There exists a rank-pair $(D',P)$ for $D$ such that 
$P$ is an alignment of the points of
$B(\Gamma)\cup\supp D'$ and $D'$ is non-negative on the interior of every edge of $\Gamma$.
\end{lemma}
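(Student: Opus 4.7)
The plan is to argue by extremal selection, paralleling the proof of Lemma~\ref{lm-support}. By Lemma~\ref{lm-support}, the set of rank-pairs $(D',P)$ for $D$ with $\supp P = B(\Gamma) \cup \supp D'$ is nonempty; from this set I would pick one, say $(D_0,P_0)$, minimizing the lexicographic pair $(n,N)$, where $n = |\{v \notin B(\Gamma) : D_0(v) < 0\}|$ and $N = \sum_{v \notin B(\Gamma)} \max(-D_0(v),0)$. It suffices to derive a contradiction from the assumption $n \geq 1$.

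Assuming $n \geq 1$, let $v$ be an interior point of some edge $e$ of $\Gamma$ with $D_0(v) = -k < 0$, and let $u_1,u_2 \in \supp P_0$ be the two points of $\supp P_0$ closest to $v$ on $e$, one on each side (each being either an endpoint of $e$, hence in $B(\Gamma)$, or another point of $\supp D_0$). I would construct a rational function $f$ on $\Gamma$ so that $D_1 := D_0 + \divi{f}$ satisfies $D_1(v) \geq 0$ --- with the positive mass added at $v$ compensated by negative changes at $u_1,u_2$ and possibly at further branching points of $\Gamma$, but leaving all other interior values unchanged --- and then exhibit an alignment $P_1$ of $B(\Gamma) \cup \supp D_1$, obtained from $P_0$ by Proposition~\ref{prop-perm} (to extend the support) and Lemma~\ref{lm-support} (to trim it back), such that $(D_1,P_1)$ is a rank-pair with $(n_1,N_1) < (n,N)$ lexicographically, contradicting minimality. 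The essential observation in the bookkeeping of $\deg^+$ is that $v$ itself contributes $0$ to $\deg^+(D_0 - \nu_{P_0})$: in each of the possible relative orderings of $v$ and its neighbors $u_1,u_2$ in $<_{P_0}$, we have $\nu_{P_0}(v) \in \{-1,0,1\}$ and $D_0(v) \leq -1 \leq \nu_{P_0}(v)$, so the change in $\deg^+$ induced by $\divi{f}$ is localized at $u_1$, $u_2$ and auxiliary branching points, where it can be controlled using the same case analysis on orderings that governs the proof of Lemma~\ref{lm-support}.

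The principal obstacle will be the construction of $f$: because $f$ must have integer slopes on all segments of $\Gamma$ while the edge lengths may be irrational, the naive choice of a ``valley'' function supported on the segment $[u_1,u_2] \subseteq e$ and vanishing elsewhere is not in general available. The resolution I envisage is to let $f$ extend beyond $[u_1,u_2]$ along a path in $\Gamma$ from $u_1$ or $u_2$ to a branching point, where the residual slope of $\divi{f}$ can be absorbed into the value of $D_1$ at that branching point (which is unconstrained by the lemma, as branching points are allowed to carry negative values in $D_1$). Verifying that $f$ can always be chosen in this way --- that the corresponding $(D_1,P_1)$ is indeed a rank-pair, and that none of the other interior points acquires a newly negative value --- is the main technical step and the place where the proof is most delicate.
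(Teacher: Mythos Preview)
Your extremal strategy matches the paper's, but the ``principal obstacle'' you flag is self-inflicted: it arises only because you insist that $f$ vanish identically outside $[u_1,u_2]$, which forces $f(u_1)=f(u_2)=0$ and hence the integer-slope compatibility problem you describe. The paper sidesteps this entirely. It (i) takes a \emph{maximal} consecutive run $v_1,\ldots,v_k$ of negative interior points on $e$, so that the neighbouring points $w_1,w_2\in\supp P_0$ are either branching points or satisfy $D_0(w_i)\ge 1$; (ii) sets $d_0=\min\{\dist_e(w_1,v_1),\dist_e(w_2,v_k)\}$ and takes $f$ to be $0$ on the segment from $v_1$ to $v'_k$ (where $v'_k$ is the point at distance $d_0$ from $w_2$) and equal to $\min\{d_0,\dist_e(\cdot,v_1),\dist_e(\cdot,v'_k)\}$ elsewhere, so that $f\equiv d_0$ off the segment $[w_1,w_2]$. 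Every slope of $f$ is $0$ or $\pm 1$ --- integers regardless of edge lengths --- and the only cost is one new interior point $v'_k$ with $D'_0(v'_k)=1>0$, which is harmless. This increases $S=\sum_{v\notin B(\Gamma)}\min\{0,D_0(v)\}$ by at least one, so there is no need to push a bad value all the way to $0$ in a single step, and certainly no need to route $f$ along paths to remote branching points.

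The new alignment $P'_0$ is then built explicitly by moving $v_1,\ldots,v_k,v'_k$ to the end of $P_0$; the verification that $\deg^+(D'_0-\nu_{P'_0})\le\deg^+(D_0-\nu_{P_0})$ reduces to checking the contributions at $w_1$ and $w_2$ only (all other terms agree or vanish because $D_0(v_i)\le -1$ and $\nu_{P'_0}(v_i)\in\{0,1\}$). Your route via paths to branching points might be made to work, but it is substantially more delicate than necessary --- you would have to control new breakpoints along that path and their effect on both $D_1$ and the alignment --- and the obstacle that motivated it disappears once you allow $f$ to be a nonzero constant outside the valley and permit one auxiliary breakpoint.
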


\begin{proof}
By Corollary~\ref{cor-rank} and Lemma~\ref{lm-support}, there exist
a divisor $D_0$ equivalent to $D$ and an alignment $P_0$ of the
points of $B(\Gamma)\cup \supp D_0$ such that
$\rank(D)=\deg^+ (D_0-\nu_{P_0})-1$. Among all such divisors let us consider the
divisor $D_0$ such that the sum
$$S=\sum_{v\:\in\:\supp D_0\setminus B(\Gamma)}\min\{0,D_0(v)\}$$
is maximal. If $S=0$, then the divisor $D_0$ is non-negative on the
interior of every edge of $\Gamma$, and there is nothing to prove.
Hence, we assume $S<0$ in the rest, i.e., there exists an edge $e$
with an internal point where $D_0$ is negative.

Let $v_1,\ldots,v_k$ be the longest sequence of points of $\supp D_0$ in the
interior of $e$, such that $D_0(v_i)<0$ for $i=1,\ldots,k$ and the
points are consecutive points, i.e., there is no
point of $\supp D_0$ on the segment between $v_i$ and $v_{i+1}$,
$i=1,\ldots,k-1$. Let $w_1$ be the point of $B(\Gamma)\cup \supp D_0$ such that the segment between $v_1$ and $w_1$
contains no point of $B(\Gamma) \cup\supp D_0$ and
$w_1\not=v_2$, and let $w_2$ be the point of $B(\Gamma)\cup\supp D_0$ such that the segment between $v_k$ and $w_2$
contains no point of $B(\Gamma)\cup\supp D_0$ and
$v_{k-1}\not=w_2$.

We now modify the divisor $D_0$ and the alignment $P_0$. By
symmetry, we can assume that $\dist_e(w_1,v_1)\le\dist_e(w_2,v_k)$. Let
$d_0=\dist_e(w_1,v_1)$, let $L$ be the segment from $w_1$ to $w_2$
that contains $v_1,\ldots,v_k$ and let $v'_k$ be the point of $L$ at
distance $d_0$ from $w_2$. Consider the rational function $f$ equal
to $0$ on $L$ between $v_1$ and $v'_k$ and
$f(v)=\min\{d_0,\dist_e(v,v_1),\dist_e(v,v'_k)\}$ elsewhere. Observe
that $\ord{w_1}{f}=\ord{w_2}{f}=-1$ if $w_1\not=w_2$, $\ord{w_1}{f}=-2$ if $w_1=w_2$, $\ord{v_1}{f}=\ord{v'_k}{f}= 1$ if
$v_1\not=v'_k$, $\ord{v_1}{f}=2$ if $v_1=v'_k$, and $\ord{v}{f}=0$ if $v\not=w_1,w_2,v_1,v'_k$. In addition, observe that if
$w_1=w_2$, then $L$ is a loop in $\Gamma$, and $w_1=w_2$ is a
branching point of $\Gamma$.

Let $D'_0=D_0+\divi{f}$. We first show that the sum
$$S'=\sum_{v\: \in \:\supp D'_0\setminus B(\Gamma)}\min\{0,D'_0(v)\}$$
is strictly larger than $S$. The value of $D'_0$ is smaller than the
value of $D_0$ only at $w_1$ and $w_2$. If $w_1$ is a branching
point, then the change of the value of the divisor at $w_1$ does not
affect the sum. Otherwise, the points $w_1$ and $w_2$ are distinct
(as we have observed earlier), and $D_0(w_1)\ge 1$ by the choice of
$w_1$. Hence, $D'_0(w_1)\ge 0$ and the sum is not affected by the
corresponding summand. Analogous statements are true for the point
$w_2$. We infer from $\ord{v_1}{f}>0$ that $D'_0(v_1)>D_0(v_1)$. Since
$D_0(v_1)<0$, this change increases the sum by one. Finally, the
change at $v'_k$ either increases the sum by one (if $D_0(v'_k)<0$)
or does not affect the sum (if $D_0(v'_k)\ge 0$) at all. We conclude
that $S'\ge S+1$.

We next modify the alignment $P_0$ to $P'_0$ in such a way that $\deg^+
(D_0-\nu_{P_0})=\deg^+ (D'_0-\nu_{P'_0})$. Without loss of
generality, we assume that $v'_k\in\supp P_0$
(cf.~Proposition~\ref{prop-perm}). The alignment $P'_0$ is
obtained from $P_0$ as follows: all the points of $\supp P_0$
distinct from $v_1,\ldots,v_k$ and $v'_k$ form the initial part of
the alignment in the same order as in $P_0$, and the points
$v_1,v_2,\ldots,v_k,v'_k$ then follow (in this order).

Let $W=\{w_1,w_2,v_1,\ldots,v_k,v'_k\}$. For simplicity, let us
assume that the points $w_1$ and $w_2$ are distinct, as well as
the points $v_1$, $v_k$ and $v'_k$. It is easy to
verify that all our arguments translate to the setting when some of
these points coincide. Since $D_0(v)=D'_0(v)$ and
$\nu_{P_0}(v)=\nu_{P'_0}(v)$ for all points $v\not\in W$, the
following holds:
\begin{align*}
\deg^+ &(D_0-\nu_{P_0})-\deg^+ (D'_0-\nu_{P'_0})= \\ &=\sum_{v\in
W}\left(\max\{0,(D_0-\nu_{P_0})(v)\}-
               \max\{0,(D'_0-\nu_{P'_0})(v)\}\right)\;\mbox{.}
\end{align*}
By the choice of the points $v_1,\ldots,v_k$, we have $D_0(v_i)\le
-1$ and therefore $(D_0-\nu_{P_0})(v) \le 0$ for $v \in W \setminus
\{v'_k,w_1,w_2\}$. Note also that $\nu_{P'_0}(v_i)=0$ and
$\nu_{P'_0}(v'_k) = 1$. Finally, note that $D'_0(v_i) \le D_0(v_i) +
1 \le 0$, unless $v_i = v'_k$, and $D'_0(v'_k) \le 1$. As a result,
we have $(D'_0-\nu_{P'_0})(v)\le 0$ for $v \in W \setminus
\{w_1,w_2\}$. Consequently, we obtain the following:
\begin{align*}
\deg^+ &(D_0-\nu_{P_0})-\deg^+ (D'_0-\nu_{P'_0})=\\
 =&\max\{0,(D_0-\nu_{P_0})(w_1)\}-\max\{0,(D'_0-\nu_{P'_0})(w_1)\}\\
 &+\max\{0,(D_0-\nu_{P_0})(w_2)\}-\max\{0,(D'_0-\nu_{P'_0})(w_2)\}\;\mbox{.}
\end{align*}
Since $\ord{w_1}{f}=-1$, we have $D'_0(w_1)=D_0(w_1)-1$. On the other
hand, the value $\nu_{P'_0}(w_1)$ is either equal to
$\nu_{P_0}(w_1)$, or to $\nu_{P_0}(w_1)-1$ (the latter is the case
if $w_1>_{P_0}v_1$). We conclude that $(D'_0-\nu_{P'_0})(w_1)$ is
equal to either $(D_0-\nu_{P_0})(w_1)$ or $(D_0-\nu_{P_0})(w_1)-1$.
Hence,
$$\max\{0,(D_0-\nu_{P_0})(w_1)\}-\max\{0,(D'_0-\nu_{P'_0})(w_1)\}\ge
  0\;\mbox{.}$$
An entirely analogous argument yields that
$$\max\{0,(D_0-\nu_{P_0})(w_2)\}-\max\{0,(D'_0-\nu_{P'_0})(w_2)\}\ge
  0\;\mbox{.}$$
Consequently, we obtain that
$$\deg^+ (D_0-\nu_{P_0})-\deg^+ (D'_0-\nu_{P'_0})\ge 0\;\mbox{.}$$
Since $\rank(D)=\deg ^+ (D_0-\nu_{P_0})-1$, and $D'_0$ is equivalent
to $D$, the inequality above must be the equality, and thus
$\rank(D)=\deg^+ (D'_0-\nu_{P'_0})-1$. 
By Lemma~\ref{lm-support}, there exists an alignment $P''_0$ of points of $B(\Gamma)\cup \supp D'_0$ such that $\rank(D)=\deg^+ (D'_0-\nu_{P''_0})-1$. As $S'>S$, the existence of $D'_0$ contradicts the choice of $D_0$.
\end{proof}

Next, we show that the divisor $D'$ can be assumed to be zero
outside $B(\Gamma)$, except possibly for a single point on each
edge, where its value could be equal to one.

\begin{lemma}
\label{lm-single} Let $D$ be a divisor on a loopless metric graph
$\Gamma$.
There exists rank-pair $(D',P)$ for $D$ such that 
$P$ is an alignment of the points of
$B(\Gamma)\cup \supp D'$, and every edge $e$ of $\Gamma$ contains at most one point $v$ where $D'$ is non-zero, and if such a point $v$ exists,
then $D'(v)=1$.

Furthermore, the alignment $P$ can be
assumed to be such that all the non-branching points of $\supp P$
follow the branching points in the order determined by $P$.
\end{lemma}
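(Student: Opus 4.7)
The plan is to start from the rank-pair $(D_0,P_0)$ given by Lemma~\ref{lm-positive} and, among all such rank-pairs, select $(D^*,P^*)$ lexicographically minimizing the pair
\[
\left(\ \sum_{v\notin B(\Gamma)}D_0(v),\ \sum_{v\notin B(\Gamma)}D_0(v)^2\ \right).
\]
I will show that $(D^*,P^*)$ already has the required interior structure, then rearrange $P^*$.

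First I would argue that $D^*(v)\le 1$ at every non-branching $v$. If some non-branching $v$ in the interior of an edge $e$ has $D^*(v)=d\ge 2$, apply the tent rational function $f_\epsilon$ at $v$ of small height $\epsilon>0$, whose divisor is $-2(v)+(v^-)+(v^+)$ with $v^\pm$ two nearby interior points of $e$. Set $D'=D^*+\divi{f_\epsilon}$ and build $P'$ by inserting $v^\pm$ into $P^*$ at the position of $v$ (removing $v$ when $d=2$). A case analysis over the $P^*$-order of $v$ relative to its $P^*$-neighbors on $e$ shows that $\deg^+(D'-\nu_{P'})=\deg^+(D^*-\nu_{P^*})$, so $(D',P')$ is again a rank-pair satisfying the conclusion of Lemma~\ref{lm-positive}. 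However $\sum_{v\notin B(\Gamma)}D'(v)=\sum_{v\notin B(\Gamma)}D^*(v)$ while $\sum_{v\notin B(\Gamma)}D^*(v)^2-\sum_{v\notin B(\Gamma)}D'(v)^2=4d-6\ge 2$, contradicting the lexicographic minimality.

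Next I would show that each edge contains at most one non-branching point of $\supp D^*$. Suppose $v_1,v_2\in\supp D^*$ both lie in the interior of the same edge $e$; by the previous step they have value $1$. Using the Abel--Jacobi correspondence on $\Gamma$, construct a rational function $f$ on $\Gamma$ with integer slopes whose divisor equals $(u)+(w)-(v_1)-(v_2)$ for a suitable branching point $w$ and a point $u$---either itself a branching point (when the Jacobian compatibility permits) or lying in the interior of a single edge, with the one-parameter freedom in its position used to arrange for the slopes to be integer. Letting $D'=D^*+\divi{f}$ and choosing an appropriate alignment $P'$, $(D',P')$ is again a rank-pair in the sense of Lemma~\ref{lm-positive}. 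But $\sum_{v\notin B(\Gamma)}D'(v)<\sum_{v\notin B(\Gamma)}D^*(v)$ (since two interior chips are replaced by at most one), contradicting the first-coordinate minimality.

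Finally, to rearrange $P^*$, apply adjacent transpositions: whenever a non-branching $v$ immediately precedes a branching $b$ in $P^*$, swap them. If $v$ and $b$ are not $P^*$-segment-connected the swap leaves $\nu_{P^*}$ unaffected. Otherwise the swap shifts $\nu_{P^*}(v)$ by $+1$ and $\nu_{P^*}(b)$ by $-1$; since $v\in\supp P^*\setminus B(\Gamma)$ forces $D^*(v)\ge 1$, the contribution at $v$ strictly decreases by $1$, and the rank-pair minimality of $(D^*,P^*)$ forces the contribution at $b$ to rise by exactly $1$, preserving $\deg^+$. Iterating yields the desired ordering. The main obstacle in this plan is the second step: the construction of the rational function $f$ with integer slopes that realizes the prescribed divisor, together with the verification that the resulting pair is again a rank-pair---since the local tent moves of the first step alone cannot reduce $\sum_{v\notin B(\Gamma)}D^*(v)$.
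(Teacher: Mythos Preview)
Your Step~2 is where the plan breaks down, and the obstacle you flag is real and fatal as stated. Requiring $(u)+(w)-(v_1)-(v_2)$ to be principal is a condition in the Jacobian $J(\Gamma)$, which is a real torus of dimension $g$. With $v_1,v_2$ fixed, the finitely many branching choices for $w$ together with the one real parameter for $u$ (its position on an edge) sweep out only a one--dimensional subset of $J(\Gamma)$. For any metric graph of genus $g\ge 2$ this subset will generically miss the origin, so no such rational function $f$ exists. Your appeal to Abel--Jacobi therefore does not produce the desired $f$ in general, and the argument cannot proceed. Even in cases where an $f$ happens to exist, you would still owe a construction of an alignment $P'$ with $\deg^+(D'-\nu_{P'})\le\deg^+(D^*-\nu_{P^*})$; unlike the tent moves in your Step~1, here the modification touches points on possibly different edges, and no local bookkeeping is available.

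The paper's proof sidesteps this entirely by staying local to a single edge. If an edge $e$ carries interior mass at least $2$, let $v_1,\dots,v_k$ be the interior support points on $e$ (ordered along $e$) with endpoints $w_1,w_2$, and take the tent function on $e$ that rises with slope $1$ from $w_1$ to $v_1$, is flat from $v_1$ to $v_k$, and descends with slope $1$ from $v_k$ to a point $w'_2$ between $v_k$ and $w_2$. Its divisor is $(w_1)+(w'_2)-(v_1)-(v_k)$, so one interior chip is absorbed by the branching endpoint $w_1$ and the total interior mass $\sum_{v\notin B(\Gamma)}D_0(v)$ drops by one. Because the move is supported on $e$, non-negativity on all other edge interiors is preserved automatically, and an explicit reordering of the alignment (put $v_1,\dots,v_k$ first) keeps $\deg^+$ unchanged. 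Thus minimizing the single quantity $\sum_{v\notin B(\Gamma)}D_0(v)$ already forces every edge to carry at most one interior chip of value $1$; your Step~1 splitting and the second lexicographic coordinate become unnecessary, and no global Abel--Jacobi input is needed. Your Step~3 (pushing non-branching points past branching ones) is essentially the paper's argument for the ``furthermore'' clause and is fine.
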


\begin{proof}
By Lemma~\ref{lm-positive}, there exist a divisor $D_0$ and an alignment $P_0$
of the points of $B(\Gamma)\cup \supp D_0$ such that  $(D_0,P)$ is a rank-pair for $D$, and
$D_0$ is non-negative in the interior of every edge of $\Gamma$. Among all such divisors, consider the
divisor $D_0$ such that the sum
$$S=\sum_{v\: \in \: \supp D_0\setminus B(\Gamma)}D_0(v)$$
is minimal. If every edge $e$ contains at most one point $v$ where
$D_0$ is non-zero, and $D_0(v)=1$ at such a point $v$, then the
lemma holds. We assume that $D_0$ does not have this property for a
contradiction.

Choose an edge $e$ such that the sum of the values of $D_0$ in the
interior of $e$ is at least two. Let $w_1$ and $w_2$ be the end
points of $e$ and $v_1,\ldots,v_k$ all the points of $\supp D_0$
inside $e$ ordered from $w_1$ to $w_2$. In the rest we assume that
$v_1\not=v_k$. As in the proof of the previous lemma, our
arguments readily translate to the setting when some of these points
are the same, but this assumption helps us to avoid technical
complications during the presentation of the proof. Let us note, in
order to assist the reader with the verification of the remaining
cases, that if $v_1=v_k$, then $D_0(v_1)\ge 2$.

By symmetry, we can assume that $\dist_e(w_1,v_1)\le\dist_e(w_2,v_k)$.
Let $d_0=\dist_e(w_1,v_1)$ and let $w'_2$ be the point on the segment
between $v_k$ and $w_2$ at distance $d_0$ from $v_k$. For the sake
of simplicity, we assume that $w_2\not=w'_2$; again, our arguments
readily translate to the setting when $w_2=w'_2$. Consider the
rational function $f$ equal to $0$ on the points outside the edge
$e$ and on the segment between $w_2$ and $w'_2$ and
$f(v)=\min\{\dist_e(v,w_1),\dist_e(v,w'_2),d_0\}$ elsewhere. Observe
that $\ord{w_1}{f}=\ord{w'_2}{f}=1$, $\ord{v_1}{f}=\ord{v_k}{f}=-1$, and
$\ord{v}{f}=0$ if $v\not=w_1,w'_2,v_1,v_k$.

Let $D'_0=D_0+\divi{f}$. Since $D'_0(v_1)=D_0(v_1)-1\ge 0$,
$D'_0(v_k)=D_0(v_k)-1\ge 0$ and $D'_0(w'_2)=1$, the sum
$$S'=\sum_{v\: \in \: \supp D'_0\setminus B(\Gamma)}D'_0(v)$$
is equal to $S-1$, and $D'_0$ is non-negative in the interior of all
the edges of $\Gamma$.

Next, we construct an alignment $P'_0$ such that $\rank
(D)=\deg^+ (D'_0-\nu_{P'_0})-1$. First, insert $w'_2$ into $P_0$
between $v_k$ and $w_2$, preserving the order of $v_k$ and $w_2$
(this did not change $\nu_{P_0}$, see Proposition~\ref{prop-perm}).
The alignment $P'_0$ is obtained from $P_0$ as follows: the points
$v_1,\ldots,v_k$ form the initial part of $P'_0$ in the same order
as they appear in $P_0$, and the remaining points form the final
part of $P'_0$, again in the same order as they appear in $P_0$.

It is easy to verify that $\nu_{P_0}(v)=\nu_{P'_0}(v)$ for all
points $v\not \in \{w_1,w'_2,v_1,v_k\}$. Hence,
\begin{align*}
\deg^+ &(D_0-\nu_{P_0})-\deg^+ (D'_0-\nu_{P'_0})=\\
 =&\max\{0,(D_0-\nu_{P_0})(w_1)\}-\max\{0,(D'_0-\nu_{P'_0})(w_1)\}\\
 &+\max\{0,(D_0-\nu_{P_0})(w'_2)\}-\max\{0,(D'_0-\nu_{P'_0})(w'_2)\}\\
&+\max\{0,(D_0-\nu_{P_0})(v_1)\}-\max\{0,(D'_0-\nu_{P'_0})(v_1)\}\\
&+\max\{0,(D_0-\nu_{P_0})(v_k)\}-\max\{0,(D'_0-\nu_{P'_0})(v_k)\}\;\mbox{.}
\end{align*}
Let us first consider the points $v_1$ and $w_1$.
We distinguish two cases based on the mutual order of $v_1$ and $w_1$ in $P_0$.

The case we consider first is that $v_1<_{P_0}w_1$. We have
$\nu_{P_0}(v_1)=\nu_{P'_0}(v_1)\le 0$ and
$\nu_{P_0}(w_1)=\nu_{P'_0}(w_1)\ge 0$. As $D'_0(v_1)=D_0(v_1)-1\ge
0$, we have that
$$\max\{0,(D_0-\nu_{P_0})(v_1)\}-\max\{0,(D'_0-\nu_{P'_0})(v_1)\}=1\;\mbox{.}$$
As $D'_0(w_1)=D_0(w_1)+1$, we have that
$$\max\{0,(D_0-\nu_{P_0})(v_1)\}-\max\{0,(D'_0-\nu_{P'_0})(v_1)\}\ge-1\;\mbox{.}$$
We conclude that
\begin{align*}
\max&\{0,(D_0-\nu_{P_0})(w_1)\}-\max\{0,(D'_0-\nu_{P'_0})(w_1)\}\\
 &+\max\{0,(D_0-\nu_{P_0})(v_1)\}-\max\{0,(D'_0-\nu_{P'_0})(v_1)\}\ge
0\;\mbox{.}
\end{align*}
Let us deal with the other case when $v_1>_{P_0}w_1$. Since
$\nu_{P_0}(v_1)=\nu_{P'_0}(v_1)+1$ and $D'_0(v_1)=D_0(v_1)-1$, we
have
$$\max\{0,(D_0-\nu_{P_0})(v_1)\}=\max\{0,(D'_0-\nu_{P'_0})(v_1)\}\;\mbox{.}$$
Similarly, since $\nu_{P_0}(w_1)=\nu_{P'_0}(w_1)-1$ and
$D'_0(w_1)=D_0(w_1)+1$, we have
$$\max\{0,(D_0-\nu_{P_0})(w_1)\}=\max\{0,(D'_0-\nu_{P'_0})(w_1)\}\;\mbox{.}$$
Therefore, in this case we also obtain that
\begin{align*}
\max&\{0,(D_0-\nu_{P_0})(w_1)\}-\max\{0,(D'_0-\nu_{P'_0})(w_1)\}\\
 &+\max\{0,(D_0-\nu_{P_0})(v_1)\}-\max\{0,(D'_0-\nu_{P'_0})(v_1)\}=
0\;\mbox{.}
\end{align*}
A symmetric argument yields that
\begin{align*}
\max&\{0,(D_0-\nu_{P_0})(w'_2)\}-\max\{0,(D'_0-\nu_{P'_0})(w'_2)\}\\
 &+\max\{0,(D_0-\nu_{P_0})(v_k)\}-\max\{0,(D'_0-\nu_{P'_0})(v_k)\}=
0\;\mbox{.}
\end{align*}
Hence,
$$\deg^+ (D_0-\nu_{P_0})-\deg^+ (D'_0-\nu_{P'_0})=0\;\mbox{.}$$
Since $\rank(D)=\deg^+ (D_0-\nu_{P_0})-1$, we have that $\rank(
D)=\deg^+ (D'_0-\nu_{P'_0})$. Since the alignment $P'_0$ can be
chosen in such a way that $\supp P'_0=B(\Gamma)\cup \supp
D'_0$ by Lemma~\ref{lm-support} and $S'<S$, the existence of $D'_0$ and
$P'_0$ contradict the choice of $D_0$ and $P_0$.

\smallskip

Last, we prove the ``furthermore'' part of the statement. That is, we show that the alignment $P$ can be
assumed to be such that all the non-branching points of $\supp P$
follow the branching points in the order determined by $P$.

Consider the alignment $P'$ obtained from $P$ by moving a point
$v\in\supp D'\setminus B(\Gamma)$ to the end of the alignment. We
claim that $\rank(D)=\deg^+(D'-\nu_{P'})-1$.

By Corollary~\ref{cor-rank}, it suffices to show that
$\deg^+(D'-\nu_{P'})\le\deg^+(D'-\nu_P)$. Let $w_1$ and $w_2$ be the
end points of the edge containing $v$. We consider in detail the
case when $v<_P w_1$ and $v<_P w_2$; the other cases are analogous.
As $D'(v)=1$ and $\nu_P(v)=-1$, it holds that $D'(v)-\nu_P(v)= 2$
and $D'(v)-\nu_{P'}(v)=0$. Similarly, $\nu_{P'}(w_i)=\nu_P(w_i)-1$,
and thus $D'(w_i)-\nu_{P'}(w_i)=D'(w_i)-\nu_P(w_i)+1$ for $i=1,2$.
We conclude that
\begin{align*}
\deg^+&(D'-\nu_P)-\deg^+(D'-\nu_{P'})=\\
=&\max\{0,D'(w_1)-\nu_P(w_1)\}-\max\{0,D'(w_1)-\nu_{P'}(w_1)\}\\
&+\max\{0,D'(w_2)-\nu_P(w_2)\}-\max\{0,D'(w_2)-\nu_{P'}(w_2)\}\\
&+\max\{0,D'(v)-\nu_P(v)\}-\max\{0,D'(v)-\nu_{P'}(v)\}\\
\ge& (D'(w_1)-\nu_P(w_1))-(D'(w_1)-\nu_{P'}(w_1))\\
&+(D'(w_2)-\nu_P(w_2))-(D'(w_2)-\nu_{P'}(w_2))+2 \ge 0\;\mbox{.}
\end{align*}
The claim now follows. Hence, we can assume without loss of
generality that all the points of $\supp D'\setminus B(\Gamma)$
follow  the points of $B(\Gamma)$ in the order determined by $P$,
i.e., $\nu_P(v)=1$ for $v\in\supp D'\setminus B(\Gamma)$.
\end{proof}

Lemma~\ref{lm-single} allows us to prove Theorem~\ref{thm:rankdeterming}. We are
grateful to an anonymous referee for suggesting the proof.
\begin{proof}[Proof
of Theorem~\ref{thm:rankdeterming}] Consider an arbitrary
divisor $D$ on a loopless metric graph $\Gamma$. By Lemma~\ref{lm-single} there exists
a rank-pair $(D',P)$ for $D$ such that $P$ is an alignment of $\supp(D')\cup B(\Gamma)$. Every edge of $\Gamma$ contains at most one inner point $v$
with non-zero value $D'(v)$, and if such a point $v$ exists, then $D'(v)=1$.
Furthermore, the points of $B(\Gamma)$
precede the other points of $\supp(D')$.

Let $E$ be the non-negative part of $D'-\nu_P$. Note that
$\supp(E)\subseteq B(\Gamma)$. We have, $D'-E\le \nu_P$. Applying the rank on
this inequality and using the fact that $\nu_P$ is non-special, we get $\rank_\Gamma(D'-E)\le \rank_\Gamma(\nu_P)=-1$. By the
choice of $D'$ and $P$, we have $\rank_\Gamma(D)=\deg(E)-1$. Consequently,
$|D'-E|=\emptyset$, that is, $E$ is as
in~\eqref{eq:defRR}, showing that $\rank_{B(\Gamma)}(D')\le
\deg(E)-1$. Consequently,
$$\rank_{\Gamma}(D)\le \rank_{B(\Gamma)}(D)= \rank_{B(\Gamma)}(D')\le
\deg(E)-1=\deg^+(D'-\nu_P)-1=\rank_\Gamma(D)\;.$$
It follows that $\rank_{\Gamma}(D)= \rank_{B(\Gamma)}(D)$, proving the theorem.  
\end{proof}

With Theorem~\ref{thm:rankdeterming} at hand, we can now give a short proof of
Theorem~\ref{thm-graph}.
\begin{proof}[Proof of Theorem~\ref{thm-graph}]
%
The following claim establishes one of the two desired inequalities between the
ranks.
\begin{claim1}\label{cl:half}
For an arbitrary divisor $F\in\Div(G)$ we have $\rank_G(F)\ge
\rank_{\Gamma}(F)$.
\end{claim1}
\begin{proof}[Proof of Claim~\ref{cl:half}]
If $F'$ is a divisor equivalent to $F$ on the graph $G$ then $F'$ is equivalent
to $F$ also on the metric graph $\Gamma$. Thus the range of minimization
in~\eqref{eq:smMin} is a superset of that in~\eqref{eq:Vecer}, and the claim
follows.
\end{proof}

Consider an arbitrary divisor $D$ on $G$. 
By Theorem~\ref{thm:rankdeterming} the set $B(\Gamma)=V(G)$ is rank-determining,
i.e., we have $\rank_\Gamma(D)=\rank_{V(G)}(D)$, where $\rank_{V(G)}(D)$ refers to $V(G)$-restricted rank on $\Gamma$. To prove the theorem, it thus
suffices to show that we range over the same sets of divisors $E$ in the
formulea~\eqref{eq:defR} (for the rank on $G$) and~\eqref{eq:defRR} (for
the $V(G)$-restricted rank on $\Gamma$). In other words, it needs to be shown
that for a divisor $E\in \Div(V(G))$ we have $|D-E|_G=\emptyset$ if and only if
$|D-E|_{\Gamma}=\emptyset$. Clearly, if $|D-E|_{\Gamma}=\emptyset$ then
$|D-E|_G=\emptyset$. The other direction follows from Claim~\ref{cl:half} which
tells us that $\rank_G(D-E)=-1$ implies $\rank_\Gamma(D-E)=-1$. 
\end{proof}

As a corollary of Theorem~\ref{thm-graph} we can prove that the rank
of a divisor on a graph is preserved under subdivision. We say that
a bijection $\varphi$ between the points of a metric graph $\Gamma$
and the points of a metric graph $\Gamma'$ is a {\em homothety} if
there exists a real number $\alpha>0$ such that $\dist_{\Gamma}(v,w)=\alpha
\cdot \dist_{\Gamma'}(\varphi(v),\varphi(w))$ for every two points
$v$ and $w$ of $\Gamma$. Note that composition of a rational
function with a homothety is a rational function, and thus a
homothety preserves the rank of divisors.

\begin{corollary}
\label{cor-graph}
Let $D$ be a divisor on a loopless graph $G$ and
let $G^k$ be the graph obtained from $G$
by subdividing each edge of $G$ exactly $k$ times.
The ranks of $D$ in $G$ and in $G^k$ are the same.
\end{corollary}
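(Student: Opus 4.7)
The plan is to transport the claim to the setting of metric graphs via Theorem~\ref{thm-graph}, and then to exploit the homothety principle articulated in the paragraph immediately preceding the corollary. Write $\Gamma$ for the metric graph corresponding to $G$ and $\Gamma^k$ for the metric graph corresponding to $G^k$. Since $G$ and $G^k$ are both loopless, two applications of Theorem~\ref{thm-graph} reduce the desired equality $\rank_G(D)=\rank_{G^k}(D)$ to the equality $\rank_\Gamma(D)=\rank_{\Gamma^k}(D)$.

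Next I would compare $\Gamma^k$ with an auxiliary metric graph $\Gamma'$ built on the same combinatorial graph $G$ but with every edge assigned length $k+1$. The key observation is that $\Gamma^k$ and $\Gamma'$ have the same underlying metric space: $\Gamma^k$ simply distinguishes the $k$ equidistant interior points of each edge of $G$ as additional branching points of degree two. Because the paper explicitly permits branching points of degree two, and because a continuous piecewise linear function with integer slopes on an unsubdivided edge of $\Gamma'$ is automatically such a function on the corresponding sub-edges of $\Gamma^k$ (and vice versa, since continuity together with integrality of slopes is preserved on both sides), the sets of rational functions, the sets of principal divisors, and hence the ranks of all divisors coincide on $\Gamma^k$ and $\Gamma'$.

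The final step is to observe that the identity map on $V(G)$ extends in the obvious way to a bijection between the points of $\Gamma$ and the points of $\Gamma'$, and this bijection is a homothety with scaling factor $\alpha=1/(k+1)$. By the homothety principle stated just before the corollary, $\rank_{\Gamma'}(D)=\rank_\Gamma(D)$. Chaining the equalities yields
\[
\rank_G(D)=\rank_\Gamma(D)=\rank_{\Gamma'}(D)=\rank_{\Gamma^k}(D)=\rank_{G^k}(D),
\]
as required.

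The step that requires the most care is the identification $\rank_{\Gamma^k}(D)=\rank_{\Gamma'}(D)$: one must verify that designating extra degree-two branching points at the subdivision sites changes neither the collection of rational functions nor the linear equivalence relation. This is essentially a definitional check (any slope change at such an extra branching point was already permitted at an interior point of an edge, so the class of rational functions is unaffected), but it is the place where the argument needs to be spelled out carefully.
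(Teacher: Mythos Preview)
Your proposal is correct and follows essentially the same route as the paper: apply Theorem~\ref{thm-graph} on both ends and use the homothety principle in the middle. The only difference is cosmetic: the paper asserts directly that there is a homothety from $\Gamma$ to the metric graph corresponding to $G^k$, whereas you interpose the auxiliary graph $\Gamma'$ (same combinatorial graph $G$, edge lengths $k+1$) and make explicit the harmless step that inserting degree-two branching points does not change the class of rational functions or the linear equivalence relation.
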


\begin{proof}
Let $\Gamma$ be the metric graph corresponding to $G$. Observe that
there exists a homothety from $\Gamma$ to the metric graph $\Gamma'$
corresponding to $G^k$. Since the rank of $D$ in $G$ is equal to the
rank of $D$ in $\Gamma$ by Theorem~\ref{thm-graph} and the rank of
$D$ in $G^k$ is equal to the rank of $D$ in $\Gamma'$ by the same
theorem, the ranks of $D$ in $G$ and in $G^k$ are the same.
\end{proof}

\medskip

Finally, we show that, in addition to the conditions given in
Lemma~\ref{lm-single}, the divisor $D'$ can be assumed to be zero
inside edges of a spanning tree of $\Gamma$. This will be used as a main
auxiliary result in Section~\ref{thm-alg} to give an algorithm for computing the
rank of divisors on metric graphs.
\begin{lemma}
\label{lm-tree} Let $D$ be a divisor on a loopless metric graph $\Gamma$. There exists a divisor $D'$, a spanning tree $T$
of $\;\Gamma$, and an alignment $P  \in \PP(\Gamma)$ such that 
$(D',P)$ is a rank-pair for $D$, $D'$ is zero in the interior of every edge of
$T$, and every edge $e\not\in T$ contains at most one interior point $v$ where $D'(v) \ne 0$, and, if such a point $v$
exists, then $D'(v)=1$.
\end{lemma}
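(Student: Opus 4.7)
My plan is to extend the minimization arguments of Lemmas~\ref{lm-positive} and~\ref{lm-single}, now jointly over the divisor, the alignment, and a spanning tree. Let $S_{D_0}\subseteq E(\Gamma)$ denote the set of edges containing an interior chip of $D_0$; by Lemma~\ref{lm-single}, each edge in $S_{D_0}$ carries a single interior chip of value $1$. Among all triples $(D_0,P_0,T)$ where $(D_0,P_0)$ satisfies the conclusion of Lemma~\ref{lm-single} and $T$ is a spanning tree of $\Gamma$, I would pick one minimizing $m:=|T\cap S_{D_0}|$; the goal is to show $m=0$, which immediately gives the lemma.

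Suppose for contradiction that $m\geq 1$ and pick $e\in T\cap S_{D_0}$ with interior chip $v$. Let $T_1,T_2$ be the two components of $T-e$ and $C$ the cut of $\Gamma$ consisting of edges with one endpoint in $V(T_1)$ and the other in $V(T_2)$, so that $e\in C$. If some non-tree edge $e'\in C$ does not lie in $S_{D_0}$, then the swap $T':=T-e+e'$ is a spanning tree with strictly fewer edges in $T'\cap S_{D_0}$, contradicting minimality. Otherwise $C\subseteq S_{D_0}$; this in particular covers the case when $e$ is a bridge of $\Gamma$, since then $C=\{e\}$. In this remaining case I would construct, in the spirit of the ``tent''/``trapezoid'' rational functions used in the proofs of Lemmas~\ref{lm-positive} and~\ref{lm-single}, a rational function $f$ on $\Gamma$ that performs a partial chip-firing across $C$: namely, $f$ is constant equal to $0$ on the $V(T_1)$-side of the cut and equal to a small parameter $\delta>0$ on the $V(T_2)$-side, interpolated piecewise linearly on each cut edge with a single slope change at its interior chip. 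Taking $\delta$ to be the minimal distance from a chip to its $V(T_1)$-side endpoint over all cut edges forces the corresponding chip onto a branching point, yielding $D'_0:=D_0+\divi{f}$ with $|S_{D'_0}|<|S_{D_0}|$.

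After updating $P_0$ to $P'_0$ by dropping the vacated non-branching chip points while preserving the branching-first ordering required by Lemma~\ref{lm-single}, the final verification is that $(D'_0,P'_0)$ remains a rank-pair for $D$. As in the proofs of Lemmas~\ref{lm-positive} and~\ref{lm-single}, each non-branching chip in $\supp P'_0$ satisfies $D'_0(v)=\nu_{P'_0}(v)=1$ and hence contributes $0$ to $\deg^+(D'_0-\nu_{P'_0})$, so only the branching endpoints of the moved chip(s) need be checked; a short case analysis on the relative $P_0$-order of these endpoints shows that the $\pm 1$ shifts in $D'_0$ and $\nu_{P'_0}$ cancel in the same manner as in the end of the proof of Lemma~\ref{lm-single}. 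Together with the strict decrease $|S_{D'_0}|<|S_{D_0}|$, this contradicts the minimality of $m$ and forces $m=0$. The main obstacle I anticipate is making the construction of $f$ work simultaneously on all cut edges with integer slopes: the slopes on the non-chosen cut edges have to be arranged so that their interior chips shift in a controlled way without destroying the structural invariants of Lemma~\ref{lm-single}, and the choice of $\delta$ must be fine-tuned to respect the integer-slope requirement across the entire cut.
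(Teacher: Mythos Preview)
Your plan is close to the paper's own argument; the paper also fires a rational function across a cut consisting entirely of ``red'' edges (edges carrying an interior chip) so as to push at least one chip onto a branching point, and then checks that $\deg^+$ is unchanged. The paper's minimization is over the number $k$ of connected components of the blue (chip-free) subgraph rather than over $|T\cap S_{D_0}|$, but since $\min_T |T\cap S_{D_0}|=k-1$ these are equivalent, and your extra tree-swap step is then unnecessary. Regarding the obstacle you flag: it evaporates once you use the correct shape of $f$. On a cut edge with chip at distance $d$ from its $V(T_1)$-end, take $f$ to be $0$ on $[0,d-\delta]$, slope $1$ on $[d-\delta,d]$, and constant $\delta$ on $[d,\ell]$. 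Then all slopes are in $\{0,1\}$, the chip moves from $d$ to $d-\delta$, and there are \emph{two} slope changes, not one; your ``single slope change at the interior chip'' forces non-integer slopes and is the source of your worry.

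There is, however, a genuine gap in your verification step. When a chip on a cut edge $e=w_1w_2$ (with $w_1\in V(T_1)$, $w_2\in V(T_2)$) reaches $w_1$, we have $D'_0(w_1)=D_0(w_1)+1$ while $D'_0(w_2)=D_0(w_2)$; on the alignment side, the $P'_0$-segment from $w_1$ along $e$ now ends at $w_2$ (no longer at a chip), and symmetrically for $w_2$. Hence $\nu_{P'_0}(w_1)-\nu_{P_0}(w_1)=1$ only when $w_2<_{P_0}w_1$, and otherwise the $+1$ lands at $w_2$. In the case $w_1<_{P_0}w_2$ one gets $(D'_0-\nu_{P'_0})(w_1)=(D_0-\nu_{P_0})(w_1)+1$ and $(D'_0-\nu_{P'_0})(w_2)=(D_0-\nu_{P_0})(w_2)-1$, and $\deg^+$ can strictly increase (take $(D_0-\nu_{P_0})(w_1)\ge 0$ and $(D_0-\nu_{P_0})(w_2)\le 0$). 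The paper avoids this by first rearranging $P_0$ so that all branching points of the ``inside'' set (your $V(T_1)$) follow all the others; this is legitimate precisely because every cut edge carries a chip that lies after all branching points, so $\nu_{P_0}$ is unchanged by the rearrangement. After this preprocessing one always has $w_2<_{P_0}w_1$ and the cancellation is pointwise. A minor additional point: the strict decrease $|S_{D'_0}|<|S_{D_0}|$ does not by itself contradict minimality of $m=|T\cap S_{D_0}|$; if the edge whose chip vanished is not $e$ itself, you must also swap it into the tree before concluding.
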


\begin{proof}
Let $D'$ be a divisor equivalent to $D$, and let $P$ be an
alignment of the points of $B(\Gamma)\cup\supp D'$ as in
Lemma~\ref{lm-single}. 

Let us now color the edges of $\Gamma$ with red and blue, so that
the red edges contain in their interior a point $v$ in $\Gamma$ with
$D'(v)=1$ and the blue edges do not. Let $V_1,\ldots,V_k$ be
the components of $\Gamma$ formed by blue edges. Choose among all
divisors $D'$ equivalent to $D$, and alignments $P$, satisfying
the conditions of Lemma~\ref{lm-single}, the divisor $D'$ such that
the number $k$ of the components $V_1,\ldots,V_k$ is the smallest
possible. If $k=1$, there exists a spanning tree of $\Gamma$ formed
by the blue edges, and there is nothing to prove.

Assume now that $k\ge 2$ for the divisor $D'$ which minimizes $k$.
Recall that $v<_P v'$ for every $v\in B(\Gamma)$ and $v'\in\supp
D'\setminus B(\Gamma)$. Let us call the red edges
connecting points of $V_1$ to points of
$B(\Gamma) \setminus V_1$ \emph{orange} edges. We can assume that the points of $V_1 \cap
B(\Gamma)$ follow all the other points of $B(\Gamma)$ in the order
determined by $P$, as every orange edge contains a point in $\supp
D' \setminus B(\Gamma)$. For an orange edge $e$ incident with a
branching point $v_1$ of $V_1$, let $d(e)$ be the distance between
$v_1$ and the point of $\supp D'$ in the interior of $e$. Let $d_0$
be the minimum $d(e)$ taken over all orange edges $e$.

Consider the following rational function $f$:
$f(v)=0$ for points $v$ on edges between two branching points of $V_1$,
$$f(v)=\min\{d_0,\max\{0,\dist_e(v_1,v)+d_0-d(e)\}\}$$ for points $v$
on any orange edge $e$ incident with any branching point $v_1$ of $V_1$,
and $f(v)=d_0$ for the remaining points of $\Gamma$. Set
$D''=D'+\divi{f}$. Clearly, $D''$ is a divisor equivalent to $D$ that is
non-zero on at most one point in the interior of every edge of
$\Gamma$ and is equal to one at such a point. Moreover, since the
blue edges remain blue and the orange edges $e$ with $d(e)=d_0$
become blue, the number of components formed by blue edges in $D''$
is smaller than this number in $D'$.

We now find an alignment $P'$ of the points of $B(\Gamma)\cup\supp
 D''$ such that $\deg^+(D''-\nu_{P'})=\deg^+ (D'-\nu_P)$. The
existence of such an alignment $P'$ would contradict our choice of
$D'$. The alignment $P'$ is defined as follows: The branching
points of $\Gamma$ are ordered as in $P$, and they precede all the
points of $\supp D''\setminus B(\Gamma)$. The points of $\supp
 D''\setminus B(\Gamma)$ are ordered arbitrarily. If $v$ is a point
of $B(\Gamma)\cup\supp D''$ that is not contained inside an orange
edge, and that is not a branching point of $V_1$, then
$D''(v)=D'(v)$ and $\nu_{P'}(v)=\nu_{P}(v)$. Hence, such points do
not affect $\deg^+(D''-\nu_{P'})$.

We now consider a branching point $v$ of $V_1$. Let $\ell$ be the
number of edges incident with $v$ that are orange with respect to
$D'$ and blue with respect to $D''$. Clearly, $\ell$ is the number
of orange edges $e$ incident with $v$ such that $d(e)=d_0$. By the
choice of $f$, $D''(v)=D'(v)+\ell$. In addition, since the other
branching points, incident with such edges, precede $v$ in the order
determined by $P'$, $\nu_{P'}(v)=\nu_P(v)+\ell$. Hence,
$(D''-\nu_{P'})(v)=(D'-\nu_P)(v)$.

It remains to consider internal points of orange edges. Let $v$ be
such a point. If $v$ is contained in the interior of an orange edge
$e$ with respect to $D'$, then $D'(v)=1$ and $\nu_P(v)=1$, i.e.,
$(D'-\nu_P)(v)=0$. If $v$ is contained in the interior of an orange
edge $e$ with respect to $D''$, then $D''(v)=1$ and $\nu_{P'}(v)=1$,
i.e., $(D''-\nu_{P'})(v)=0$. We conclude that such points do not
affect $\deg^+(D''-\nu_{P'})$ at all. Consequently,
$\deg^+(D''-\nu_{P'})=\deg^+ (D'-\nu_P)$, as desired.
\end{proof}

\section{An algorithm for computing the rank}\label{sec-algorithm}

We now present the main algorithmic result of this paper. We
describe an algorithm which, given a metric graph $\Gamma$ and a
divisor $D$ on it, computes its rank. It is not a priori clear that such an algorithm has to exist\footnote{Indeed, let us recall as a negative
example in a similar setting that there exists no universal algorithm for solving Diophantine equations, a result due to Matiyasevich~\cite{bib-matiyasevich}.}. If the lengths of all the edges of $D$ and all the distances of non-zero
values of $D$ to the branching points are rational, then the problem
is solvable on a Turing machine. However, this need not be the case in general.
As the input can contain irrational numbers, we
assume real arithmetic operations with infinite precision to be
allowed in our computational model. The bound on
the running time of our algorithm can easily be read from its
construction; it is a simple function depending on the number of
edges, number of vertices of $\Gamma$, the ratio between the longest
and the shortest edge in $\Gamma$, and the values of $D$. The
running time is not more than exponential in any of these
parameters.

There are several papers dealing with algorithmic aspects of tropical geometry,
as~\cite{bib-bogart+,bib-jensen+,bib-theobald} for a sample. Many of these
papers rely on  machinery of commutative algebra, while our algorithm utilizes
combinatorial properties of divisors on tropical curves which were developed in
previous parts of the paper.
\begin{theorem}
\label{thm-alg}
There exists an algorithm that for a divisor $D$ on a metric graph $\Gamma$
computes the rank of $D$.
\end{theorem}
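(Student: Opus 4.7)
The plan is to use Corollary~\ref{cor-rank} together with Lemma~\ref{lm-tree} to reduce the computation of $\rank(D)$ to a finite minimization. By Corollary~\ref{cor-rank}, we have $\rank(D)+1=\min_{(D',P)}\deg^+(D'-\nu_P)$ over all rank-pairs $(D',P)$ for $D$, and Lemma~\ref{lm-tree} guarantees that the minimum is attained by a rank-pair of a very restricted combinatorial form: there exist a spanning tree $T$ of the underlying graph and a subset $S\subseteq E\setminus T$ of non-tree edges such that $D'$ vanishes on the interior of every tree edge, carries a single chip of value $1$ on each edge of $S$, and vanishes on the interior of the remaining non-tree edges, while the alignment $P$ orders $B(\Gamma)\cup\{x_e:e\in S\}$, where $x_e\in(0,\ell(e))$ is the position of the chip on $e$.

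My algorithm enumerates all \emph{structural types}, namely triples $(T,S,P)$ as above, keeping the positions $x_e$ symbolic. This enumeration is finite: there are finitely many spanning trees, finitely many subsets of non-tree edges, and, once $T$ and $S$ are fixed, finitely many combinatorial alignments of $B(\Gamma)\cup\{z_e:e\in S\}$. For each structural type $(T,S,P)$ the algorithm then attempts to reconstruct a divisor $D'\sim D$ realizing it. The unknowns are the chip positions $x_e\in(0,\ell(e))$ for $e\in S$ and the integer branching-point values $a_v$ for $v\in B(\Gamma)$, subject to $\deg(D')=\deg(D)$ and to the equivalence constraint $D'-D=\divi{f}$ for some rational function $f$ on $\Gamma$. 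The latter translates into one real-linear equation per independent cycle of $\Gamma$ (equating the signed sum of slopes around the cycle to zero modulo integer periods), producing $g$ conditions that pin down the continuous parameters $x_e$ and determine the admissible integer tuples $(a_v)$ up to finitely many choices; here $\sum_v a_v=\deg(D)-|S|$ and the cycle relations further restrict the $a_v$ to a bounded set, so the candidate divisors can be enumerated. For every feasible $D'$, evaluate $\deg^+(D'-\nu_P)$ and retain the overall minimum; subtracting $1$ yields $\rank(D)$.

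The main obstacle is the linear-algebra step that reconstructs $D'$ for each structural type. It is precisely here that the model of computation matters: since the lengths of $\Gamma$ and the positions of the support of $D$ may be irrational, one must solve the $g$-dimensional linear system over $\mathbb{R}$, which is why the theorem statement allows real-arithmetic operations with infinite precision. Once the candidate positions $x_e$ are computed, the combinatorial compatibility with $P$ (that the $x_e$ interleave the branching points exactly as prescribed by $P$) is checked in constant time per structural type, and the pair is discarded if incompatible. Correctness of the output then follows immediately from Corollary~\ref{cor-rank} and Lemma~\ref{lm-tree}, termination is manifest from the finiteness of the enumeration, and the running time is bounded by a simple function of $|V(G)|$, $|E(G)|$, the ratio between the longest and shortest edge lengths of $\Gamma$, and $\max_{v}|D(v)|$, matching the informal bound claimed in the discussion preceding the theorem.
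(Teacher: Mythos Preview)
Your reduction to Lemma~\ref{lm-tree} and the idea of enumerating structural types $(T,S,P)$ is sound, and this is also how the paper begins. The gap is in the step where you claim that, for a fixed structural type, ``the cycle relations further restrict the $a_v$ to a bounded set, so the candidate divisors can be enumerated.'' This is asserted but not argued, and in fact it is false as stated. Consider the simplest case: $\Gamma$ is a cycle with two branching points $v_1,v_2$ and both edges of length~$1$, take $D=0$, $T=\{e_1\}$, and $S=\emptyset$. Then every $D'=-2k(v_1)+2k(v_2)$ with $k\in\ZZ$ is equivalent to $D$ (via the function that is linear of slope $-k$ on each edge), so there are infinitely many $D'$ of this structural type with $D'\sim D$. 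The Jacobian condition here is a single congruence modulo the cycle length, and because the edge lengths are commensurable it admits an infinite arithmetic progression of integer solutions. Your algorithm, which evaluates $\deg^+(D'-\nu_P)$ for ``every feasible $D'$,'' therefore does not terminate.

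What is missing is an \emph{a priori} bound on the integers $a_v$ (equivalently, on the slopes of the rational function $f$ with $D'=D+\divi{f}$) for the particular $D'$ supplied by Lemma~\ref{lm-tree}. The paper obtains this bound in two steps. First it shows, using only the identity $\rank(D)=\deg^+(D'-\nu_P)-1$ and the trivial estimate $\rank(D)\le\deg(D)$, that $|D'(v)|\le 2(nM+m)$ at every branching point, whence $\deg^+(\divi{f})$ is bounded by an explicit constant $p$ depending on $n,m,M$. Second, it invokes Lemma~\ref{lem:GatAuxi} (Gathmann--Kerber) to convert the bound on $\deg^+(\divi{f})$ into a bound $U$ on all slopes of $f$. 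The algorithm then enumerates integer slope vectors $F:T\to\{-U,\dots,U\}$ on the tree edges; each such choice determines $f$ (and hence $D'$) uniquely, including the positions $x_e$ on non-tree edges. This replaces your unbounded search over the $a_v$ by a finite one, and is the essential idea you are missing.
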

As a tool for  proving Theorem~\ref{thm-alg} we shall need the
following auxiliary result of  Gathmann and
Kerber~\cite[Lemma~1.8]{bib-gathmann+}.
\begin{lemma}\label{lem:GatAuxi}
Let a metric graph $\Gamma$ and an integer $p$ be given. Then there exists a
computable integer $U$ such that any rational function $f$ on $\Gamma$  with
$\deg^+(\divi{f})\le p$ has slope at most $U$ at every point.
\end{lemma}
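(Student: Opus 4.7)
The plan is to show that the choice $U := p$ already suffices, via a divergence-style argument applied to super-level sets of $f$.

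Fix a point $x_0 \in \Gamma$ at which $f$ has slope $s$ along some emanating segment. Replacing $f$ by $-f$ if necessary, we may assume $s > 0$; this preserves the hypothesis, since $\divi{-f} = -\divi{f}$ and $\deg^+(\divi{f}) = \deg^-(\divi{f})$, as $\deg(\divi{f}) = 0$. Let $\Gamma_f \supseteq B(\Gamma)$ denote the finite set consisting of branching points and slope-change points of $f$. Choose a real value $c < f(x_0)$ with $f(x_0) - c$ small enough that $c$ differs from $f(v)$ for every $v \in \Gamma_f$ and that the point $y_0$ on the same linear piece of $f$ as $x_0$, lying downhill from $x_0$ and satisfying $f(y_0) = c$, still belongs to that linear piece. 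Such $c$ exists because $\Gamma_f$ is finite.

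Let $A$ be the connected component of $\{y \in \Gamma : f(y) \ge c\}$ containing $x_0$. By the choice of $c$, every point $y \in \partial A$ lies in the interior of an edge, satisfies $f(y) = c$, and has $f$ linear in a neighborhood of $y$; in particular $y_0 \in \partial A$. After subdividing $\Gamma$ at all breakpoints of $f$ and at all boundary points of $A$, each resulting edge carries a constant integer slope. When we sum outgoing slopes at the vertices of $A$, the edges internal to $A$ cancel in pairs (the two endpoints contribute opposite slopes), and the only surviving contributions come from edges exiting $A$ at boundary points, yielding the divergence identity
$$\sum_{v \in A} \ord{v}{f} = -\sum_{y \in \partial A} s(y),$$
where $s(y) > 0$ denotes the integer slope of $f$ at $y$ pointing into $A$.

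The left-hand side is bounded below by $-\deg^-(\divi{f}) = -\deg^+(\divi{f}) \ge -p$, since discarding the nonnegative terms of $\sum_{v \in A}\ord{v}{f}$ can only decrease it, while the sum of negative orders over $A$ is at least $-\deg^-(\divi{f})$. Hence $\sum_{y \in \partial A} s(y) \le p$. Because $y_0 \in \partial A$ and $s(y_0) = s$ (as $y_0$ and $x_0$ lie on the same linear piece of slope $s$, so the slope into $A$ at $y_0$ is $+s$), and all terms in the sum are positive integers, we conclude $s \le p$, giving $U = p$. The main obstacle is the clean verification of the divergence identity at unbounded ends of a tropical curve where $f$ can attain $\pm\infty$; however, an unbounded end $v_\infty \in A$ contributes only the finite quantity $\ord{v_\infty}{f}$ to the left-hand side, and the unique edge incident to $v_\infty$ either lies entirely in $A$ (and cancels in the usual way) or meets $\partial A$ at a finite interior point (absorbed into $\partial A$), so no extra terms arise.
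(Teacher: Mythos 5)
Your argument is correct in substance, and it is necessarily a different route from the paper's, because the paper does not prove Lemma~\ref{lem:GatAuxi} at all: it imports it from Gathmann and Kerber and records in Remark~\ref{rem:boundU} only that their proof yields $U=(\Delta+p)^m$. Your superlevel-set (``divergence'') argument is the standard potential-theoretic proof of this fact and gives the far sharper bound $U=p$: after subdividing at breakpoints and at the (finitely many) points of the regular level $c$, every edge meeting a component $A$ of $\{f\ge c\}$ in one endpoint does so at a boundary point, the internal edges cancel, and the resulting identity $\sum_{v\in A}\ord{v}{f}=-\sum_{y\in\partial A}s(y)$ together with $\sum_{v\in A}\ord{v}{f}\ge -\deg^+(\divi{f})\ge -p$ bounds each inward slope, and in particular $s$, by $p$. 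If substituted for the citation, this would replace the exponential quantity $(\Delta+p)^m$ in the running-time discussion of Theorem~\ref{thm-alg} by a linear one.

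One orientation slip should be repaired. Having normalized $s>0$, you seek $y_0$ \emph{downhill} from $x_0$ on the same linear piece; but if $s$ is the outgoing slope along the emanating segment, then $f$ \emph{increases} along it, and when $x_0$ is an endpoint of the maximal linear piece (a breakpoint of $f$ or a branching point) there is no downhill point of that piece, so $y_0$ need not exist and $x_0$ need not be surrounded by $\{f\ge c\}$ for $c<f(x_0)$. The fix is immediate: either run the argument at an interior point of the maximal linear piece carrying the slope $s$ (the slope is constant there and interior points have a downhill side), or take $c$ slightly \emph{larger} than $f(x_0)$ and let $A$ be the component of $\{f\ge c\}$ containing the uphill portion of the segment, so that the crossing point lies on $\partial A$ with inward slope $s$. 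With this adjustment the proof is complete; the discussion of unbounded ends is not needed for the metric graphs to which Theorem~\ref{thm-alg} is reduced, though your remark about them is correct since each (possibly infinite) linear segment still contributes cancelling outgoing slopes at its two endpoints.
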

\begin{remark}\label{rem:boundU}
It follows from the proof in~\cite{bib-gathmann+} that $U=(\Delta+p)^m$ in
Lemma~\ref{lem:GatAuxi} is a sufficient bound; here
$\Delta$ and $m$ are the maximum degree and the number of edges of $\Gamma$,
respectively.
\end{remark} 

We are now ready to prove Theorem~\ref{thm-alg}.
\begin{proof}[Proof of Theorem~\ref{thm-alg}]
To prove the theorem, it is enough to show that there
are only finitely many divisors equivalent to a given divisor $D$
that satisfy the conditions in the statement of Lemma~\ref{lm-tree}.

We write $\ell_e$ for the length of an edge $e$. Without loss of
generality, we can assume that $\Gamma$ is loopless, and that $\supp D\subseteq B(\Gamma)$
(introduce new branching points incident with only two edges if
needed). We can also assume that the length of each edge of $\Gamma$
is at least one. Let $n$ be the number of branching points of
$\Gamma$, $m$ the number of edges of $\Gamma$, and $M=\max_{v \:\in\:
\supp D}|D(v)|$.
We assume that $n\ge 2$ (and thus $m\ge 1$) since
otherwise $\Gamma$ is formed by a single point $w_0$ and
$\rank(D)=\max\{D(w_0),-1\}$. Similarly, we can also assume that
$M\ge 1$ since otherwise $D$ is equal to zero at all points and thus
$\rank( D)=0$. 
Finally, we let $U$ be given by Lemma~\ref{lem:GatAuxi} for
$$p=2n^2M+3mn\ge\frac12(3n^2M+3mn+m-n+1)\;.$$

We first describe the algorithm and then verify its correctness.
Fix an arbitrary vertex $w\in B(\Gamma)$. The algorithm ranges
through all spanning trees $T$ of $\Gamma$ (here, $T$ is the set of
edges of the tree, i.e., $|T|=n-1$) and all functions
$F:T\to\{-U,-U+1,\ldots,U-1,U\}$.

The algorithm then constructs all rational functions $f$ on $\Gamma$
such that for every branching point $v\in B(\Gamma)$ we have
$$f(v)=\sum_{i=1}^k F(e_i)\ell_{e_i} \;\mbox{,}$$
where $e_1, e_2, \ldots, e_k$ are the edges of $T$ on the path from
$w$ to $v$, $f$ is linear on every edge of $T$, and $\ord{v}{f}\not=0$
for at most one point $v$ on every edge not in $T$ (and $\ord{v}{f}=1$
for such a point $v$ if it exists).

Let us observe that there is only one  rational function
$f$ satisfying the above constraints. Indeed, the function $f$ is
uniquely defined on edges of $T$ as it should be linear on such
edges. Consider now an edge $e$ between branching points $v_1$ and
$v_2$ that is not contained in $T$. By symmetry, we can assume that
$f(v_1)\le f(v_2)$. Then $e$ contains a
point $v$ either with $\ord{v}{f}=1$ or $v=v_2$ such that  $f$ is linear on
$e$ everywhere except for $v$. 
The average slope of
$f$ from $v_1$ to $v_2$ along $e$ is $(f(v_1)-f(v_2))/\ell_e$. Therefore, we must have a slope of
$\lfloor(f(v_1)-f(v_2))/\ell_e\rfloor$ from $v_1$ to $v$ and a slope of
$\lfloor(f(v_1)-f(v_2))/\ell_e\rfloor+1$ from $v$ to $v_2$. This determines the
position of $v$ on $e$ as well.
We conclude that there are only
finitely many rational functions $f$ that satisfy conditions
described in the previous paragraph.

The algorithm now computes the divisor $D'=D+\divi{f}$, and then ranges
through all alignments $P$ of the points $B(\Gamma)\cup\supp D'$.
For each such alignment, the value of $\deg^+ (D'-\nu_P)-1$ is
computed and the minimum of all such values over all the choices of
$T$, $F$ (and thus $f$) and $P$ is output as the rank of $D$. Since
the number of choices of $T$, $F$ and $P$ is finite, the algorithm
eventually finishes and outputs the rank of $D$.

We have to verify that the above algorithm is correct. By
Corollary~\ref{cor-rank}, the output value is greater than or equal
to the rank of $D$. Hence, we have to show that the algorithm at
some point of its execution considers $D' \in \Div(\Gamma)$ and $P
\in \PP(\Gamma)$ such that $\deg^+(D'-\nu_P)-1=\rank(D)$. Consider
now the divisor $D'$ and the alignment $P$ as in
Lemma~\ref{lm-tree}. Since $\supp P=B(\Gamma)\cup\supp D'$, and the
algorithm ranges through all alignments $P$ of $B(\Gamma)\cup\supp
D'$ for every constructed divisor $D'$, it is enough to show that
the algorithm constructs a rational function $f$ such that
$D'=D+\divi{f}$.

Consider the step when the algorithm ranges through $T$ as in
Lemma~\ref{lm-tree} and let $f_0$ be the rational function given by
the lemma. We can assume without loss of generality that $f_0(w)=0$.

We establish that there exists a function $F:T\to\{-U,\ldots,U\}$
such that $f_0$ can be constructed (as described above) from $F$.
The existence of such a function $F$ will yield the correctness of
the presented algorithm. In order to establish the existence of $F$,
it is enough to show that absolute value of the slope of $f_0$ is
bounded by $U$ on every edge of $T$.  Due to the relation between $U$ and $p$ it suffices to prove that 
\begin{equation}\label{eq:ToEstablish}
\deg^+(\divi{f_0})\le p\;.
\end{equation}

We devote the rest of the proof to establishing~\eqref{eq:ToEstablish}.

It can be inferred from the definition of the rank that $\rank(D)
\le\deg(D)$. Hence, $\rank(D)\le nM$. We now show that $|D'(v)|\le
2(nM+m)$ for every $v\in B(\Gamma)$. If there exists a branching
point $v_0$ with $D'(v_0)> 2(nM+m)$, then 
\begin{align*}
nM &\ge \rank(D) =
\deg^+(D'-\nu_P)-1   \\
&\ge D'(v_0)-\nu_P(v_0)-1> 2(nM+m)-m-1	 \ge 2nM\;,
\end{align*}
which is
impossible. On the other hand, if there exists a branching point
$v_0$ with $D'(v_0)\le -2(nM+m)$, then $D'(v_0)-\nu_P(v_0)\le
-2(nM+m)+1<0$ and thus $\deg^+(D'-\nu_P)=\deg^+(D'')$, where
$D''(v_0)=0$ and $D''(v)=(D'-\nu_P)(v)$ for $v\not=v_0$. Observe
that
\begin{align*}
\deg(D'')&=\deg(D'-\nu_P)-(D'(v_0)-\nu_P(v_0))\\
&\ge \deg(D')-\deg(\nu_P)+2(nM+m)-1\\
&\ge -nM - (m-n) +2(nM+m)-1 = nM+m+n-1\\
&\ge r(D)+m+1\;.
\end{align*}
We therefore have
$$\deg^+(D'-\nu_P)=\deg^+(D'')\ge\deg(D'')>nM+m\ge\rank(D)+m+1,$$
which contradicts our choice of $D'$ and $P$. We conclude that
$|D'(v)|\le 2(nM+m)$, and that 
\begin{equation}\label{eq:ordFBound}
|\ord{v}{f_0}|\le M+|D'(v)|\le 3(nM+m)\;,
\end{equation}
for every $v\in B(\Gamma)$.

We express $$\deg^+(\divi{f_0})=\frac{1}{2}\sum_{v\in\Gamma}|\ord{v}{f_0}|=\frac{1}{2}\left(\sum_{v\in B(\Gamma)}|\ord{v}{f_0}|+\sum_{v\in \Gamma\setminus B(\Gamma)}|\ord{v}{f_0}|\right)\;.$$
The first sum on the right-hand side has $n$ summands, each can be bounded using~\eqref{eq:ordFBound}. The second sum has at most $m-(n-1)$ non-zero summands, each of them equal to one. Plugging in these bounds we establish~\eqref{eq:ToEstablish}.
\end{proof}

Proposition~\ref{thm-trop-reduce} and Theorem~\ref{thm-alg} now imply
the existence of an algorithm for computing the rank of a divisor on
tropical curves.

\begin{corollary}
\label{cor-alg}
There exists an algorithm that for a divisor $D$ on a tropical curve $\Gamma$
computes the rank of $D$.
\end{corollary}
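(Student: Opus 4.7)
The plan is to reduce the computation on a tropical curve directly to the loopless metric graph setting where Theorem~\ref{thm-alg} already supplies an algorithm. The reduction is supplied by Proposition~\ref{thm-trop-reduce}: given a tropical curve $\Gamma$, removing the interiors of the infinite edges and their unbounded ends yields a metric graph $\Gamma'$, together with a retraction $\psi_\Gamma:\Gamma\to\Gamma'$ that induces a map $\Div(\Gamma)\to\Div(\Gamma')$ preserving both degree and rank.

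Concretely, the algorithm I would describe proceeds as follows. First, from the input tropical curve $\Gamma$, construct the metric graph $\Gamma'$ by deleting the interior of every infinite edge and its unbounded end. Second, compute $D' := \psi_\Gamma(D)\in\Div(\Gamma')$; since $\psi_\Gamma$ is the identity on $\Gamma'$ and sends every point of an infinite edge to the unique finite endpoint of that edge, this amounts to summing, for each infinite edge $e$, the coefficients of $D$ supported on the removed part of $e$ into the coefficient at the finite end of $e$. This is a finite, elementary calculation on the finitely many points in $\supp D$. Third, invoke the algorithm of Theorem~\ref{thm-alg} on the pair $(\Gamma',D')$ and output its result.

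The correctness of this procedure is immediate from Proposition~\ref{thm-trop-reduce}, which yields $\rank_\Gamma(D)=\rank_{\Gamma'}(D')$, so the number returned by the Theorem~\ref{thm-alg} subroutine is exactly $\rank_\Gamma(D)$. Termination is clear since the only new work beyond the subroutine is the deterministic construction of $\Gamma'$ and $D'$, which involves only finitely many arithmetic operations on the coefficients of $D$. There is no real obstacle here: the reduction to metric graphs was set up precisely for this purpose in Subsection~\ref{sec-trop-reduce}, and all of the substantive algorithmic content — bounding slopes, enumerating spanning trees, alignments, and rational functions of controlled slope — is already encapsulated in Theorem~\ref{thm-alg}.
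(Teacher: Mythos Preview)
Your proposal is correct and matches the paper's approach exactly: the paper simply states that Proposition~\ref{thm-trop-reduce} and Theorem~\ref{thm-alg} together imply the corollary, and your write-up spells out precisely that reduction. If anything, you have given more detail than the paper does.
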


The algorithm which we presented is finite, i.e., it terminates for
every input, however, its running time is exponential (as can be seen by
plugging the bound from Remark~\ref{rem:boundU} into Theorem~\ref{thm-alg}) in the size of
the input. It seems natural to ask whether it is possible to design a polynomial-time
algorithm for computing the rank of divisors. In the case of graphs the
question was posed by Hendrik Lenstra~\cite{bib-lenstra}, and, to the best of
our knowledge, is still open. Tardos~\cite{bib-tardos} presented an algorithm
which decides whether a divisor $D$ on a graph has a non-negative rank.
His algorithm is weakly polynomial, i.e., the running time is
bounded by a polynomial in the size of the graph and $\deg^+(D)$
(note that Tardos was using a different language to state the
result). It is possible to modify his algorithm in such a way that the
running time becomes polynomial in the size of the graph and
$\log(\deg^+(D))$, i.e., to obtain a truly polynomial-time
algorithm for deciding whether a given divisor on a graph has a
non-negative rank. We omit further details.

\section*{Acknowledgements}

We would like to thank Matt Baker,
Marc Coppens, and  anonymous referees for carefully reading this manuscript and
helpful comments. In particular, the current proof of Theorem~\ref{thm-graph}
via Theorem~\ref{thm:rankdeterming} was suggested by one of the referees.

\end{document}